\newtheorem{Theorem}{Theorem}[section]
\newtheorem{Lemma}[Theorem]{Lemma}
\newtheorem{Corollary}[Theorem]{Corollary}
\begin{document}

\date{ \today}

\title[]{Graphical cyclic permutation groups}
\author{Mariusz Grech}
\address{Institute of Mathematics, University of Wroclaw \\
pl.Grunwaldzki 2, 50-384 Wroclaw, Poland}
\email{Mariusz.Grech@math.uni.wroc.pl}


\newcommand{\cnd}  {$\Box$}
\newcommand{\V}     {\mbox{$\cal V$}}
\newcommand{\AUT}     {\mbox{$\it AUT$}}
\newcommand{\B}     {\mbox{$ {\cal A}_2 $}}
\newcommand{\A}     {\mbox{${\cal A}_1$}}
\newcommand{\G}     {\Gamma}
\newcommand{\<}     {\langle }
\renewcommand{\>}   {\rangle }
\newcommand{\s}     {\mbox{$\bf\sigma$}}
\newcommand{\AB}     {\mbox{$\times$}}
\newcommand{\AC}     {\mbox{$\otimes$}}
\newcommand{\row}[2]{{#1}^{(#2)}}
\newcommand{\Au}    {\rm Aut}
\renewcommand{\wr}   {\hspace{.5mm} wr \hspace{.5mm}}
\newcommand{\teki}{\textbullet \hspace{2mm}}

\begin{abstract}
We  establish conditions for a permutation group generated by a single permutation of a prime power order to be an automorphism group of a graph or an edge-colored graph. This corrects and generalizes the results of the two papers on cyclic permutation groups published in 1978 and 1981 by S. P. Mohanty, M. R. Sridharan, and S. K. Shukla.
\end{abstract}

\maketitle

\section{Introduction}
It is well known that while every abstract group is isomorphic to the automorphism group of a graph, not every permutation group can be represented directly as the automorphism group of a concrete graph.

The problem of representatibility of a permutation group $A=(A,V)$ as the full automorphism group of a graph $G=(V,E)$ was studied first for regular permutation groups. In particular, the question which  abstract groups have a regular representation as an automorphism group of a graph (the so-called $GRR$) has received considerable attention. In the language of permutation groups this is the same as to ask which regular permutation groups are automorphism groups of the graphs. There were many partial results in this area (see for
instance \cite{imr1,imr2,imrwa,no,nowa1,nowa2,wa1,wa2,wa3}). The full
characterization has been obtained by Godsil \cite{god} in 1979.  
In \cite{ba1}, L. Babai uses the result of Godsil to prove a similar characterization in the case of directed graphs.

In \cite{mss1, mss2}, S. P. Mohanty, M. R. Sridharan, and S. K. Shukla,
consider cyclic permutation groups (i.e., generated by a single permutation) whose order is $p^n$ for a prime $p$. In \cite[Theorem~3]{mss2}, they describe all such groups for $p>5$ that are automorphism groups of  graphs. However, although the result is true, the proof contains an essential gap. Moreover, the authors make a false claim that there are no such groups for $p=3$ or $p=5$. 
We correct these results here and generalize  as follows. First, we consider also the cases $p\in \{ 2,3,5\}$, which are different, and completely ignored in \cite{mss1, mss2}. 
In fact the case $p>5$ is the simplest and the least interesting. 
The difficulty and beauty of the problem is hidden in these three cases.
At second, we generalize the results describing the cyclic permutation groups of a prime power order that are automorphism groups of \emph{edge-colored graphs}. As in many earlier cases (see e.g. results in \cite{pei,grekis1,gre1,gre2}) edge-colored graphs turn out to be a more appropriate setting for these kind of problems, giving simpler, and more elegant formulations of results. 

In fact, the general problem has been considered already by H.~Wielandt in \cite{wie}.
Permutation groups that are automorphism groups of colored graphs were called $2^*$-closed. 
In \cite{kis1}, A. Kisielewicz introduced the so-called \emph{graphical complexity of permutation groups}. By $GR(k)$ we denote the class of automorphism groups of $k$-\emph{colored graphs}, by which we mean the complete graphs whose edges are colored with at most $k$ colors. By $GR$ we denote the union of all classes $GR(k)$ (which is the class of all $2^*$-closed groups).
Moreover, we put $GR^*(k)= GR(k) \setminus GR(k-1)$, and for a permutation group $A$, we say that $A$ has a \emph{graphical complexity} $k$ if $A \in GR^*(k)$. 
Then, $GR(2)$ is the class of automorphism groups of simple graphs.

Now, the main natural problem in this subject is the characterization of
permutation groups in $GR$, i.e.,  those permutation groups that are
automorphism groups of $k$-colored graphs for any $k$. The most exciting question stated in \cite{kis1} is however whether the hierarchy $GR(k)$ is a real hierarchy, that is, whether there are permutation groups in each of the classes $GR^*(k)$. For now we know only that $GR^*(k)$ are nonempty for all $k\leq 6$, which means in particular that there is a 6-colored graph whose automorphism group is different from all the automorphism groups of $k$-colored graphs with $k \leq 5$; such a graph on $n=32$ vertices was constructed in \cite{grekis2}.

There are various approaches to the topic. In \cite{grekis1} it is shown that
the direct sum of two groups belonging to $GR(k)$ belongs itself to $GR(k+1)$,
and most often it belongs to $GR(k)$. Similar results are obtained for other products. In \cite{grekis2}, we have described almost all $k$-colored graphs with the highest degree of symmetry. In particular, we have shown that if a $k$-colored graph is edge-transitive and color-symmetric, then $k \leq 5$. 

 In this paper we  determine when a cyclic permutation group $A$ of a prime power order belongs to $GR$, and show that if $A \in GR$, then $A \in GR(3)$. We also characterize, when it is in $GR(2)$, and when in $GR^*(3)$. In section~\ref{definicje}, we recall some definitions concerning edge-colored graphs and permutation groups. We also recall two results from \cite{grekis1}, and prove their generalizations we need in the sequel. In section~\ref{pierwsze}, we complete the consideration of cyclic permutation groups of prime order started in \cite{mss1, mss2}. In particular, we show that there are cyclic permutation groups of order $3$ and $5$ which belong to $GR(2)$, contradicting the claim given in \cite{mss1}. In section~\ref{potegi}, we give a correct proof of the theorem stated in \cite{mss2} concerning cyclic permutation group of prime power order $p^n$ with $p>5$, and complete the research considering the cases for $p= 2,3,5$.

\section{Definitions and basic facts} \label{definicje}
We assume that the reader has the basic knowledge in the areas of graphs and permutation groups.
Our terminology is standard and for nondefined notions the reader is referred to \cite{ba,yap}.

A {\it $k$-colored graph} (or more precisely $k$-\emph{edge-colored graph})
is a pair $G = (V,E)$, where $V$ is the set of vertices,
and $E$ is an \emph{edge-color function}
from the set $P_2(V)$ of two elements subsets of $V$ into
the set of colors $\{ 0, \ldots, k-1\}$ (in other words, $G$ is a complete simple
graph with each edge colored by one of $k$ colors).
In some situations it is helpful to treat the edges colored $0$ as missing.
In particular, a $2$-colored graph can be treated as a usual simple graph.
Generally, if no confusion can arise, we omit the adjective ,,colored''.
By a (sub)graph of $G$ \emph{spanned} by a subset $W \subseteq V$
we mean $G'=(W,E')$ with $E'(\{v,w\})=E(\{v,w\})$, for all $v,w \in W$.

Let $v,w \in V$ and $i \in \{ 0, \ldots, k-1 \}$.
If $E(\{ v,w\} )=i$, then we say that $v$ and $w$ are $i$-{\it neighbors}.
Moreover, for a set $X \subseteq \{ 0, \ldots, k-1 \}$, we say that a vertex $w$ is an $X$-neighbor of a vertex $v$ if
there is a color $i \in X$ such that $w$ is an $i$-neighbor of $v$.
By $d_i(v)$ ($i$-{\it degree} of a vertex $v$) we denote the number of $i$-neighbors of $v$.
For $X \subseteq \{0, \ldots, k-1\}$, we say that $G$ is $X$-{\it connected}, if
for every $v,w \in V$ there is a path $v=v_0, v_1, \ldots, v_n=w$ in $G$ such
that the color of each edge $\{ v_i, v_{i+1} \}$ belongs to $X$.
Obviously, for a $k$-colored graph $G=(V,E)$,
and for the sets $X,Y \subseteq \{ 0, \ldots, k-1 \}$ such that $X \cup Y = \{ 0, \ldots, k-1 \}$,
$G$ is either $X$-connected or $Y$-connected. In particular, there is always a
color $p$ such that $G$ is $(\{ 0,\ldots, k-1 \} \setminus \{ p \})$-connected.

An automorphism of a colored graph $G$ is a permutation $\sigma$ of the set $V$ of vertices
preserving the edge function: $E(\{ v, w\}) = E( \{ \sigma(v), \sigma(w) \})$, for all $v,w \in V$. The group of automorphisms of $G$ will be denoted by
$Aut(G)$, and considered as a permutation group $(Aut(G),V)$ acting on the set of the vertices $V$.

Permutation groups are treated up to permutation isomorphism. 
Generally, a permutation group $A$ acting on a set $V$ is denoted $(A,V)$ or just $A$, if the set $V$ is clear or not important. By $S_n$, we denote the symmetric group on $n$ elements, and by $I_n$ the trivial one element group acting on $n$ elements (consisting of the identity only). 
By $C_n$ we denote a regular action of ${\mathbb Z}_n$. 
In particular, $S_2 = C_2$. 
Finally, $D_n$ denotes the dihedral group of symmetries of $n$-cycle i.e., the group of automorphisms of a graph $G=(V,E)$ with  $V = \{ v_0, \ldots, v_{n-1}\}$,
$E(\{v_i,v_{(i+1\; {\rm mod}\, n)})=1$ for all $i$, and $E(v_i,v_j)=0$, otherwise.
It is clear that $C_n$ is a subgroup of $D_n$ of index two.

Later, we will use two kinds of products of permutation groups:
{\it Direct sum}. For permutation groups $(A,V), (B,W)$,  by the direct sum of $A$ and $B$ we mean the permutation group $(A \oplus B, V \cup W)$ with
the action given by
\begin{eqnarray}
\nonumber
(a,b)(x)&=& \left\{\begin{array}{cl} a(x) & \textrm{for } x \in V, \\
b(x) & \textrm{for } x \in W.
\end{array}\right.
\end{eqnarray}

{\it Parallel product}. For the permutation group $(A,V)$, the
parallel product $\row{A}{n}$ is the permutation group $(A,V \times \{1, \ldots, n\})$ with the following natural action.
$$a((v_1,k))=(a(v_1), k).$$

Now, we recall two theorems which are proved in \cite{grekis1} and will be used later.

\begin{Theorem} \cite[Corollary 3.5]{grekis1}\label{suma1}
Let $A = A_1 \oplus A_2$ be a direct sum. Then, $A\in GR$ if and only if each
of $A_1$ and $A_2$ belongs to $GR$ or $A$ is equal to $I_2 = I_1 \oplus I_1$.
\end{Theorem}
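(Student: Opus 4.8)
The plan is to work with the standard description of the class $GR$: a permutation group $(A,X)$ belongs to $GR$ (equivalently, is $2^*$-closed) exactly when $A=\overline{A}$, where $\overline{A}$ denotes the largest subgroup of $\mathrm{Sym}(X)$ having the same orbits as $A$ on the set $P_2(X)$ of two-element subsets of $X$. Indeed $A\le\overline{A}$ always; coloring each edge $\{x,y\}$ of the complete graph on $X$ by the $A$-orbit of $\{x,y\}$ yields a colored graph with automorphism group precisely $\overline{A}$; and every colored graph $G$ with $Aut(G)=A$ has all its color classes saturated by $A$-orbits of edges, so $\overline{A}\le Aut(G)=A$. Writing $A=A_1\oplus A_2$ on $V\cup W$ with $A_i$ acting on the corresponding summand, the theorem becomes the statement that $\overline{A_1\oplus A_2}=A_1\oplus A_2$ if and only if $\overline{A_1}=A_1$ and $\overline{A_2}=A_2$, apart from the value $A_1\oplus A_2=I_2$.

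For the implication ``$\overline{A_1}=A_1$ and $\overline{A_2}=A_2$ imply $\overline{A}=A$'', I would take $\phi\in\overline{A}$ and first show that $\phi$ respects the partition $\{V,W\}$. The three families of two-element subsets of $V\cup W$ --- those inside $V$, those inside $W$, and those meeting both parts --- are each a union of $A$-orbits on $P_2(V\cup W)$, hence each is preserved by $\phi$. If $|V|,|W|\ge2$ this already forces $\phi(V)=V$ and $\phi(W)=W$, since $V$ is the union of the pairs lying inside $V$; if one part, say $V$, is a single vertex and $|W|\ge2$, then $\phi$ fixes that vertex because it preserves the pairs lying inside $W$, whose union is $W$. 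Once the partition is preserved, $\phi|_V$ preserves the $A_1$-orbits on $P_2(V)$, so $\phi|_V\in\overline{A_1}=A_1$, and similarly $\phi|_W\in A_2$; hence $\phi\in A$. The only configuration left uncovered is $|V|=|W|=1$, that is $A=I_2$, and there $A\notin GR$ because a colored graph on two vertices has a single edge and so automorphism group $S_2$.

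For the converse, assume $\overline{A}=A$ and take $\psi\in\overline{A_1}$; the goal is $\psi\in A_1$. The crux is a lemma: unless $A_1$ is the trivial group on two points, $\psi$ also preserves the $A_1$-orbits on single points of $V$. This is proved by pairing points and comparing orbits of two-element subsets: if $v$ lies in an $A_1$-orbit $O$ with $|O|\ge2$, pairing $v$ with another point of $O$ keeps $\psi(v)$ inside $O$; an $A_1$-fixed $v$ is pinned by pairing it with a non-fixed point (using the previous case and the fact that two $A_1$-fixed points form a pair that $\overline{A_1}$ fixes setwise); and if $A_1$ fixes every point, i.e.\ $A_1=I_{|V|}$, then $|V|\le1$ is trivial, $|V|\ge3$ already forces $\psi$ to be the identity, and $|V|=2$ with $A_1=I_2$ is the excluded case. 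Granting the lemma, put $\tilde\psi=(\psi,\mathrm{id}_W)\in\mathrm{Sym}(V\cup W)$: it preserves the $A$-orbits on pairs inside $V$ (as $\psi\in\overline{A_1}$), on pairs inside $W$ trivially, and on a mixed pair $\{v,w\}$ because the $A$-orbit of $\{v,w\}$ is exactly the set of $\{v',w'\}$ with $v'$ in the $A_1$-orbit of $v$ and $w'$ in the $A_2$-orbit of $w$, and $\tilde\psi(\{v,w\})=\{\psi(v),w\}$ lies in it by the lemma. Hence $\tilde\psi\in\overline{A}=A=A_1\oplus A_2$, so $\psi=\tilde\psi|_V\in A_1$; thus $\overline{A_1}=A_1$, and symmetrically $\overline{A_2}=A_2$.

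I expect the real difficulty to be concentrated in the small summands. Both arguments above run without fuss once each $A_i$ acts on at least three points; the delicate part is the bookkeeping for a one- or two-point summand --- determining exactly which such configurations are exceptional, and seeing why $I_2=I_1\oplus I_1$ is the only irreducible obstruction: a two-vertex colored graph is inescapably $S_2$-symmetric, whereas in any larger direct sum the edges joining a small summand to the rest can be recruited to break its symmetry.
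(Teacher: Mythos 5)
The paper does not actually prove this statement---it imports it verbatim from \cite[Corollary 3.5]{grekis1}---so there is no in-paper argument to compare against, and I can only assess your proof on its own terms. The closure framework ($A\in GR$ iff $A$ coincides with the group of all permutations preserving every $A$-orbit on $P_2(X)$) is the right tool here, and your ``if'' direction is complete and correct, including the identification of $I_2$ as the irreducible obstruction on that side.

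The ``only if'' direction, however, has a genuine gap, and it is not a repairable one. You prove that $A\in GR$ forces $A_1\in GR$ only under the extra hypothesis $A_1\ne I_2$, and you then declare ``$|V|=2$ with $A_1=I_2$'' to be ``the excluded case''. But the theorem excludes only $A=I_2$ (that is, $A_1=A_2=I_1$), not $A_1=I_2$. In the configuration you leave uncovered the implication you are trying to prove is in fact false: take $A_1=I_2$ and $A_2=S_2$. Then $A=I_2\oplus S_2$ is a permutation group of order two, hence lies in $GR(2)\subseteq GR$ by Theorem~\ref{rz2} (equivalently, a direct check shows that the only nonidentity permutation of the four points preserving all four $A$-orbits on pairs is the one swapping the two points of the $S_2$-orbit), while $A_1=I_2\notin GR$. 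So the biconditional, read literally for an arbitrary binary decomposition, fails exactly where your point-orbit lemma breaks down; the ``only if'' half is true only under the additional assumption that neither summand equals $I_2$ (or if the statement is read for the decomposition into transitive constituents). Your lemma and the permutation $\tilde\psi=(\psi,\mathrm{id}_W)$ do correctly establish that corrected version, and the corrected version suffices for every use made in the present paper (e.g.\ $C_n\oplus I_m\notin GR$ follows since $C_n\notin GR$ and $C_n\ne I_2$); but as a proof of the statement as written the argument cannot be completed, because the statement itself fails at precisely the point you waved away.
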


\begin{Theorem} \cite[Lemma 3.1 and Theorem 4.1]{grekis1} \label{suma2}
Let $A_1, A_2 \in GR(k)$, for some $k \geq 2$. Then,
\begin{enumerate}
    \item $A_1 \oplus A_2 \in GR(k+1)$.
    \item If $A_1 \ne A_2$, then $A_1 \oplus A_2 \in GR(k) $. 
\item $A_1 \oplus I_n \in GR(k) \cup \{I_2\}$.  
\end{enumerate}
\end{Theorem}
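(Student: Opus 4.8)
The plan is to realize $A_1=Aut(G_1)$ and $A_2=Aut(G_2)$ by $k$-colored complete graphs $G_i=(V_i,E_i)$ on disjoint vertex sets, and to glue them into one colored graph $G$ on $V_1\cup V_2$ that keeps $E_1$ inside $V_1$, keeps $E_2$ inside $V_2$, and colors every edge joining $V_1$ to $V_2$ by a single color $c$. With such a $G$ the inclusion $A_1\oplus A_2\subseteq Aut(G)$ is immediate, because a pair $(a_1,a_2)$ fixes each $E_i$ and, acting on the two sides separately, preserves the constant bipartite color; so the whole content lies in the reverse inclusion. That inclusion splits into two tasks: (i) every $\sigma\in Aut(G)$ sends the partition $\{V_1,V_2\}$ to itself; (ii) no automorphism interchanges $V_1$ and $V_2$. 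The three parts of the theorem differ only in the color $c$ allowed between the parts and in how delicate task~(ii) is.

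For task~(i) I would use the remark after the definition of $X$-connectedness: after permuting colors we may assume each $G_i$ is connected in $\{1,\dots,k-1\}$, and then take $c=0$ (in part~(1), where a fresh color is affordable, it is even simpler to set $c=k$). The spanning subgraph $H$ of $G$ consisting of all edges of color $\ne c$ is then connected inside each $V_i$ and has no crossing edge, so its connected components are exactly $V_1$ and $V_2$; since $\sigma$ preserves every color class it preserves $H$, hence permutes $\{V_1,V_2\}$, and accordingly restricts to automorphisms of $G_1,G_2$ (or of $G_2,G_1$). For task~(ii): a swapping automorphism restricts to an isomorphism $G_1\to G_2$ --- the constant bipartite color adds no condition --- so $Aut(G_1)$ and $Aut(G_2)$ are conjugate, i.e. $A_1=A_2$ as permutation groups. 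The hypothesis of part~(2) excludes this, and the resulting $G$ uses only $k$ colors, which proves~(2). For part~(1) the same $G$ uses at most $k+1$ colors, which already gives~(1) when $A_1\ne A_2$; when $A_1=A_2=A$, I would use the spare color to break the isomorphism $G_1\cong G_2$, recoloring one genuinely occurring color class of the second copy by the new color $k$ to get $G_2'$ with $Aut(G_2')=A$ but carrying a color that $G_1$ lacks, so that no swap exists (the monochromatic case $A=S_{|V|}$, and more generally the case where a realization avoids a color, being handled directly or by induction on $k$).

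Part~(3) I would reduce to~(2) by iteration, using $I_1\in GR(k)$ for all $k\ge2$: applying~(2) step by step to $A_1\oplus I_n=(\cdots(A_1\oplus I_1)\oplus\cdots)\oplus I_1$ stays in $GR(k)$ as long as no intermediate term is trivial, and the degenerate possibility is exactly $I_1\oplus I_1=I_2$, which is the value outside $GR$ recorded by the term $\{I_2\}$. The step I expect to be the real obstacle is task~(i) in full generality: one must ensure the two realizations can be taken connected off the bipartite color \emph{simultaneously}, even for degenerate factors like $I_m$ or $S_m$, so that the components of $H$ genuinely collapse to $\{V_1,V_2\}$ and no isolated vertex or small rigid block can slip from one side to the other. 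Once that is pinned down, the remaining verifications --- that the two restrictions of $\sigma$ really lie in $A_1$ and $A_2$, and the color count for $G$ --- are routine.
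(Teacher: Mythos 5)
This statement is quoted from \cite{grekis1} and the paper offers no proof of it, so there is nothing in-paper to compare against; your sketch is, however, essentially the argument of the cited source. The construction --- realize each $A_i$ by a $k$-colored graph, normalize colors so that each $G_i$ is connected off color $0$, join $V_1$ to $V_2$ by constant color $0$, and, when $A_1=A_2$, spend the $(k+1)$-st color on one whole color class of the second copy to kill the swap --- is sound, and the obstacle you single out is not really one: the remark in Section~\ref{definicje} guarantees for every colored graph on at least two vertices a color $p$ off which it is connected, and relabelling colors inside each $G_i$ separately changes neither the automorphism group nor the color count, so the components of the off-$0$ subgraph are exactly $V_1$ and $V_2$ (or, in the monochromatic/one-vertex cases, one part plus singletons, which still forces preservation of the partition whenever both parts have size at least $2$ or different sizes). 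The genuinely loose ends are the degenerate trivial cases, which you gesture at but do not settle: when $|V_2|=1$ there is no color class to recolor, and indeed $A_1=A_2=I_1$ gives $I_1\oplus I_1=I_2\notin GR$, so part (1) as literally stated needs that exception; likewise part (3) fails literally for trivial $A_1$ (e.g.\ $I_1\oplus I_2=I_3\in GR^*(3)\setminus GR(2)$), and your iteration of part (2) stalls exactly there because an intermediate term leaves $GR(k)$. These exceptions are harmless for the way the theorem is used in this paper (where $A_1$ is always a nontrivial cyclic group), but a complete proof should state them.
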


Recall also (see \cite{grekis1}, for instance) that $I_1 \in GR(0)$, $I_2
\notin GR$, $I_n \in GR^*(3)$, for $n \in \{ 3,4,5\}$, and $I_n \in GR(2)$, otherwise. This describes completely the case of permutation groups of order one (which, as we see, is not quite trivial). In further consideration, we assume that the
order of a cyclic permutation group is at least two.
Later on, we will need one more lemma.

\begin{Lemma}\label{trywialne}
Let $k \geq 1$ and $B \notin GR(k)$ be a permutation group such that for every $k$-colored graph $G$, with the property $B \subseteq Aut(G)$, there is a permutation $f \in Aut(G)\setminus B$  preserving all the orbits of $B$. Then, $B \oplus C \notin GR(k)$ for every permutation group $C$.
\end{Lemma}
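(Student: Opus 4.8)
The plan is to argue by contradiction. Suppose $B \oplus C \in GR(k)$, say $B \oplus C = Aut(G)$ for some $k$-colored graph $G$. Since $B \oplus C$ acts on the disjoint union $V \cup W$ with $B$ acting on $V$ and $C$ on $W$, both $V$ and $W$ are invariant sets for $Aut(G)$. The first step is to pass to the subgraph $G_V$ spanned by $V$: every permutation of the form $(b,\mathrm{id}_W)$ with $b \in B$ lies in $B \oplus C = Aut(G)$, fixes $W$ pointwise, and stabilizes $V$, so its restriction to $V$ is an automorphism of $G_V$; hence $B \subseteq Aut(G_V)$. As $G_V$ is itself a $k$-colored graph, the hypothesis on $B$ supplies a permutation $f \in Aut(G_V) \setminus B$ that preserves every orbit of $B$ on $V$.

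The heart of the argument is to show that $f$, extended by the identity on $W$, is an automorphism of $G$. It preserves the colors of edges inside $V$ (because $f \in Aut(G_V)$) and inside $W$ (trivially), so the only thing to verify is the colors of edges joining $V$ and $W$. Fix $w \in W$ and a color $i$, and consider the set $N_i(w) \cap V$ of $i$-neighbors of $w$ lying in $V$. Since each $(b,\mathrm{id}_W)$ with $b \in B$ is an automorphism of $G$ fixing $w$ and stabilizing $V$, the set $N_i(w)\cap V$ is $B$-invariant, hence a union of orbits of $B$ on $V$. As $f$ preserves every $B$-orbit, it preserves $N_i(w)\cap V$; consequently $E(\{v,w\}) = E(\{f(v),w\})$ for all $v \in V$ and $w \in W$. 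Therefore $(f,\mathrm{id}_W) \in Aut(G) = B \oplus C$.

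To finish, note that an element of $B \oplus C$ acting as the identity on $W$ must have the form $(b,\mathrm{id}_W)$ with $b \in B$; comparing the actions on $V$ forces $f = b \in B$, contradicting $f \notin B$. Hence no such $G$ exists and $B \oplus C \notin GR(k)$.

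\textbf{Main obstacle.} The only nontrivial point is the observation that the restriction $N_i(w)\cap V$ of a color-neighborhood to $V$ is a union of $B$-orbits; this is precisely what makes an orbit-preserving automorphism of $G_V$ automatically compatible with the bipartite part of $G$ between $V$ and $W$. Everything else is routine bookkeeping about the direct-sum action, and the argument is uniform in $k \ge 1$.
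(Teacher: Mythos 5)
Your proposal is correct and follows essentially the same route as the paper: restrict to the subgraph spanned by the $B$-part, extract the orbit-preserving $f$, extend it by the identity, and check the cross edges. Your justification for the cross edges (each color-neighborhood $N_i(w)\cap V$ is $B$-invariant, hence a union of $B$-orbits preserved by $f$) is just a rephrasing of the paper's argument that $f(v)=b(v)$ for some $b\in B$ and that $(b,\mathrm{id})$ preserves the edge $\{v,w\}$; the only cosmetic difference is that the paper works with an arbitrary $G'$ satisfying $B\oplus C\subseteq Aut(G')$ rather than assuming equality and deriving a contradiction.
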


\begin{proof}
Let $B=(B,V)$ and $G'$ be a $k$-colored graph such that $B \oplus C \subseteq Aut(G')$.
Then, obviously, the graph $G$, spanned by the set $V$, has the mentioned property.
Let $f \in Aut(G) \setminus B$ be a permutation which preserves all the orbits of $B$.
By $f'$ we denote a permutation which acts as $f$ on $V$ and fixes all other vertices of $G'$.
Obviously, $f' \notin B \oplus C$. We show that $f' \in Aut(G')$.

We have to show that the colors of the edges of the graph $G'$ are preserved by $f'$.
If an edge $e$ is contained in the graph $G$, then $f'(e)=f(e)$ and $E(f'(e))=E(f(e))=E(e)$ as required.
If neither of the ends of $e$ belongs to $V$, then $f'(e)=e$ and the statement is still true.
The only nontrivial case is the edges of the form $e=\{ v,w\}$, where $v \in V$ and $w \notin V$.
Then, $f'(e)=\{ f(v),w\}$. Since $f$ preserves all orbits of $B$, there exists $b \in B$ such that $f(v) = b(v)$.
Consequently, $f'(e) = b'(e)$, where $b'=(b,1) \in B \oplus C$. Hence $E(f'(e))=E(b'(e))=E(e)$, as required.
This shows that $f' \in Aut(G')$ and completes the proof of lemma.
\end{proof}

\section{Primes} \label{pierwsze}
In this section we give a complete description of graphical complexity of cyclic permutation groups of prime order, started in \cite{mss1,mss2}. 
These groups have the form $\row{C_p}{r} \oplus I_q$. 
The result from \cite{mss1} can be rewritten as follows.

\begin{Theorem}\cite[Theorem 3]{mss1} \label{rz2}
Every permutation group of order two belongs to $GR(2)$.
\end{Theorem}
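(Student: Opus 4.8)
The plan is first to identify the group combinatorially, and then to realise it as the automorphism group of a concrete simple graph. Any permutation group $A$ of order two is generated by a single involution $\sigma$; writing the cycle decomposition of $\sigma$ as $r\ge 1$ transpositions together with $q\ge 0$ fixed points (note that $r\ge 1$ because $|A|=2$, not $1$), we see that $A$ is permutation-isomorphic to $\row{C_2}{r}\oplus I_q$. Hence it suffices to prove that $\row{C_2}{r}\oplus I_q\in GR(2)$ for all $r\ge 1$ and $q\ge 0$.

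For the case $q=0$ I would exhibit an explicit simple graph, namely the path $P_{2r}$ on vertices $v_1,\dots,v_{2r}$ with edges $\{v_i,v_{i+1}\}$. Every automorphism permutes the (at most two) vertices of degree one, and once the image of an endpoint is fixed, following the path forces the image of every remaining vertex; hence $|Aut(P_{2r})|\le 2$. The reversal $\rho\colon v_i\mapsto v_{2r+1-i}$ is a nontrivial automorphism, so $Aut(P_{2r})=\langle\rho\rangle$ with $\rho=(v_1\,v_{2r})(v_2\,v_{2r-1})\cdots(v_r\,v_{r+1})$, and as a permutation group this is exactly $\row{C_2}{r}$. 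Thus $\row{C_2}{r}\in GR(2)$.

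To finish I would absorb the fixed points. If $q=0$ we are done; if $q\ge 1$, apply Theorem~\ref{suma2}(3) with $A_1=\row{C_2}{r}\in GR(2)$ (so $k=2$) and $n=q$, obtaining $\row{C_2}{r}\oplus I_q\in GR(2)\cup\{I_2\}$. Since this group contains $\rho$ it has order two, hence differs from the one-element group $I_2$, so $\row{C_2}{r}\oplus I_q\in GR(2)$, as required. I expect the one delicate point to be precisely this absorption step: for $q\in\{2,3,4,5\}$ one cannot simply take a disjoint union of $P_{2r}$ with a rigid graph on $q$ vertices, because no such simple graph exists ($I_2\notin GR$ and $I_3,I_4,I_5\in GR^*(3)$), so the fixed points must interact with the rest of the graph, which is exactly what Theorem~\ref{suma2}(3) supplies; checking that $|Aut(P_{2r})|=2$ is routine.
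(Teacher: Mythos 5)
Your proof is correct. Note that the paper itself gives no argument for this statement at all: it is imported by citation from [Mohanty--Sridharan--Shukla, Theorem 3], so there is no in-paper proof to compare against. Your argument is a sound self-contained replacement: the identification of an arbitrary order-two permutation group with $\row{C_2}{r}\oplus I_q$ ($r\ge 1$) is immediate from the cycle decomposition of the generating involution; the path $P_{2r}$ is rigid once the image of an endpoint is chosen, its reversal is a fixed-point-free product of $r$ transpositions, and so $Aut(P_{2r})$ is permutation isomorphic to $\row{C_2}{r}$; and the absorption of the $q$ fixed points via Theorem~\ref{suma2}(3) is exactly the right tool, with the exclusion of the exceptional value $I_2$ correctly justified by an order count. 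You are also right to flag that a naive disjoint union with a rigid $q$-vertex graph fails for $q\in\{2,3,4,5\}$, which is precisely why the appeal to Theorem~\ref{suma2}(3) (rather than to a direct-sum construction) is needed; this matches how the paper handles trivial orbits elsewhere (e.g.\ in the corollaries following Lemmas~\ref{p3} and~\ref{p2}).
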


\begin{Theorem}\cite[Theorem 2]{mss1}\label{prime>5}
Let $p>5$ be a prime. 
Then, $\row{C_p}{r} \in GR(2)$ if and only if $r \geq 2$.
\end{Theorem}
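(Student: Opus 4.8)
The plan is to prove the two implications separately, the ``only if'' direction by contraposition. For the contrapositive, suppose $r=1$, so $\row{C_p}{1}=C_p$; I claim $C_p\notin GR(2)$. Any graph $G$ with $Aut(G)=C_p$ (as permutation groups) has exactly $p$ vertices with $C_p$ acting regularly, hence $G\cong\mathrm{Cay}({\mathbb Z}_p,T)$ for some $T=-T$ with $0\notin T$; but then $a\mapsto -a$ is an automorphism of $G$, and it is not a translation since $p$ is odd and $>1$, so $Aut(G)$ properly contains $C_p$ — a contradiction. Thus $\row{C_p}{1}\notin GR(2)$.

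For the ``if'' direction I would construct, for each $r\ge 2$, a single simple graph $G$ on the vertex set ${\mathbb Z}_p\times\{1,\dots,r\}$ with $Aut(G)=\row{C_p}{r}$. Write $V_k={\mathbb Z}_p\times\{k\}$ for the $k$-th layer and set $S_0=\{0,1,3\}\subseteq{\mathbb Z}_p$ (three distinct residues because $p>5$). The edges of $G$ are: inside $V_1$, the $p$-cycle $(a,1)\sim(a+1,1)$; inside $V_k$ for $k\ge 2$, no edges; and between consecutive layers, $(a,k)\sim(a+s,k+1)$ for all $a$ and all $s\in S_0$, for $k=1,\dots,r-1$. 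All three edge-families are invariant under the diagonal translations $(a,k)\mapsto(a+c,k)$, so $\row{C_p}{r}\subseteq Aut(G)$, and the whole task is the reverse inclusion.

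To prove $Aut(G)\subseteq\row{C_p}{r}$, let $\sigma\in Aut(G)$. First, the vertices of $V_1$ have degree $5$, those of $V_r$ degree $3$, and those of any intermediate layer degree $6$; hence $V_1$ is the unique layer of degree $5$, so $\sigma(V_1)=V_1$, and since the neighbourhood of $V_1$ lies in $V_1\cup V_2$ while every vertex of $V_2$ has a neighbour in $V_1$, one gets $\sigma(V_2)=V_2$; peeling off layers one at a time (each $V_k$ is adjacent only to $V_{k-1}\cup V_k\cup V_{k+1}$) shows $\sigma$ fixes every $V_k$ setwise. Next, the induced graph on $V_1$ is a $p$-cycle, so $\sigma|_{V_1}$ is $a\mapsto\varepsilon a+d_1$ with $\varepsilon\in\{1,-1\}$. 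I then propagate along the chain: if $\sigma|_{V_k}$ is $a\mapsto\varepsilon a+d_k$, preservation of the $S_0$-edges between $V_k$ and $V_{k+1}$ says $\sigma|_{V_{k+1}}$ maps $a+S_0$ onto $(\varepsilon a+d_k)+S_0$ for every $a$; since $S_0$ and $S_0\pm1$ each overlap $S_0$ in exactly one point, comparing the cases $a$ and $a+1$ forces $\sigma|_{V_{k+1}}$ to be affine with the same multiplier $\varepsilon$. Finally, substituting this affine form back into the set-equation for the pair $(1,2)$: if $\varepsilon=-1$ the left side becomes $(-a+d_1)+\{0,1,-2\}$, which equals $(-a+d_1)+\{0,1,3\}$ only when $-2\equiv 3$ in ${\mathbb Z}_p$, i.e. $p=5$; as $p>5$ this is impossible, so $\varepsilon=1$, and then the same equations give $d_{k+1}=d_k$ for every $k$, so $\sigma$ is a single diagonal translation. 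Hence $Aut(G)=\row{C_p}{r}$, which gives $\row{C_p}{r}\in GR(2)$.

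The one genuinely essential use of the hypothesis, and the only subtle point, is the asymmetry of $S_0=\{0,1,3\}$ — that $c-S_0\ne S_0$ for every $c$, equivalently $\{0,1,-2\}\ne\{0,1,3\}$, which holds precisely when $p>5$. For $p\le 5$ every subset of ${\mathbb Z}_p$ is symmetric about some point, and indeed the analogous $G$ would admit the extra ``reflection'' $(a,k)\mapsto(-a+(k-1),k)$, so the construction — and the theorem — really do require $p>5$. The remaining ingredients (the degree bookkeeping that makes the layer partition canonical, the overlap count $|S_0\cap(S_0\pm1)|=1$, and the short computation recovering $\sigma|_{V_{k+1}}$ from $\sigma|_{V_k}$) are routine, so I expect no serious obstacle beyond keeping this asymmetry in view throughout.
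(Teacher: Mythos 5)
Your proof is correct. Note that the paper does not actually prove this statement itself --- it is quoted from \cite{mss1}, and the author only remarks that ``the direct correct proof \dots can be found in \cite{mss1}'' --- so the natural comparison is with the paper's proof of the prime-power analogue (Theorem~\ref{poteg}), and there your construction is essentially the same: the first orbit carries a $p$-cycle, the connection set $\{0,1,3\}$ between the first two layers is exactly the gadget the paper uses, and the degree count ($5$ on $O_1$) pins down the layer partition in the same way. The two minor differences are that you propagate the $\{0,1,3\}$ pattern between \emph{all} consecutive layers and kill the reflection by an affine-map argument (using $|S_0\cap(S_0\pm1)|=1$ and the asymmetry $-S_0+t\ne S_0$), whereas the paper joins the higher layers by single shifted matchings and excludes the reflection via a unique-common-neighbor computation; both routes use $p>5$ in exactly the same place, namely that $\{0,1,3\}$ is not a translate of its negative. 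Your Cayley-graph argument for the ``only if'' direction is the same well-known fact the paper invokes ($C_n\le Aut(G)$ forces $D_n\le Aut(G)$).
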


In \cite{mss2}, the latter is a corollary of \cite[Theorem~3]{mss2}, which is given with an incorrect proof. The direct correct proof of Theorem~\ref{prime>5} can be found in \cite{mss1}.

We consider two remaining primes $3$ and $5$. We start with simple observations.
It is an easy and well known fact (see for instance \cite{chao} and \cite{kag}) that the groups $C_n$, for $n >2$, do not belong to $GR(2)$. 
The same is true for any class $GR(k)$, where $k \geq 2$. The reason is that    
for every $k$-colored graph $G$, if $Aut(G)$ contains $C_n$, then $Aut(G)$ contains $D_n$. 
Hence, $C_n \notin GR$. It follows, by Theorem~\ref{suma1}, that $C_n \oplus I_m \notin GR$ for any $n\geq 3$ and $m\geq 1$. In contrast, we have

\begin{Lemma}\label{p3}
$\row{C_n}{2} \in GR(3)$ for every $n \geq 3$. 
\end{Lemma}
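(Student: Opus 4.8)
The plan is to construct an explicit $3$-colored graph $G$ on vertex set $V = \mathbb{Z}_n \times \{1,2\}$ whose automorphism group is exactly $\row{C_n}{2}$. The generator $\sigma$ of $\row{C_n}{2}$ acts by $\sigma(i,t) = (i+1 \bmod n, t)$ on both copies simultaneously. The group we want is the cyclic group generated by this single $\sigma$, so we must color the edges so that (i) $\sigma$ is an automorphism, (ii) no ``reflection'' on either copy survives, and (iii) the two copies are rigidly linked so that they cannot be permuted or shifted independently. The natural idea is to put a directed-cycle-type coloring \emph{within} each copy (to kill reflections, as in the standard argument that $C_n \notin GR(2)$ forces $D_n$ — here we have a third color available to break the symmetry), and then use the \emph{cross} edges between the two copies to pin the relative rotation.

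The key steps, in order. First I would fix colors: inside copy $1$, color $\{(i,1),(i+1,1)\}$ with color $1$ and all other within-copy-$1$ edges color $0$; inside copy $2$, color $\{(i,2),(i+1,2)\}$ with color $1$ and all other color $0$ — so each copy on its own is a plain $n$-cycle with automorphism group $D_n$. Second, and crucially, I would color the cross edges to break all the unwanted symmetry at once: assign color $2$ to the cross edges $\{(i,1),(i,2)\}$ (a ``perfect matching'' between the copies aligned at the same index) and color $0$ to all other cross edges $\{(i,1),(j,2)\}$ with $i \ne j$. Then $\sigma$ is visibly an automorphism. Third, I would show $Aut(G) = \langle\sigma\rangle$: any automorphism $\tau$ must preserve the color-$1$ subgraph, which is a disjoint union of two $n$-cycles (the two copies), hence $\tau$ either fixes the two copies setwise or swaps them; within each fixed copy $\tau$ acts as an element of $D_n$; the color-$2$ matching being a bijection aligning index $i$ in copy $1$ to index $i$ in copy $2$ then forces the rotations on the two copies to be equal \emph{and} forces out the reflections (a reflection on copy $1$ composed with a reflection on copy $2$ would have to be compatible with the matching, but one checks the matching is preserved only by the ``diagonal'' rotations). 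Handling the copy-swap case: a swap would have to carry the color-$2$ matching to itself and intertwine the two cycle-colorings, which again reduces to a rotation-or-reflection on $\mathbb{Z}_n$, and the same matching-compatibility check rules out everything but, at most, one extra involution — which I would rule out by a small asymmetry, e.g. using a \emph{third} distinct color on the copy-$2$ cycle, or by noting $d_1$ and $d_2$ distinguish the copies if I instead color copy $2$'s cycle edges with color $2$ as well but its matching differently. (I will choose the cleanest combinatorial device here once the calculation is laid out.) Fourth, I would confirm $G$ genuinely needs $3$ colors for this group in the sense required — but the Lemma only claims membership in $GR(3)$, so it suffices that the construction uses at most $3$ colors, which it does; there is nothing to prove about minimality.

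The main obstacle I expect is step three: ruling out \emph{all} spurious automorphisms, in particular the interplay between a possible swap of the two copies and the reflections of the individual cycles. The danger is that some composition ``reflect copy $1$, reflect copy $2$, swap'' accidentally preserves both the cycle-coloring and the index-aligned matching. To close this cleanly I would most likely break the symmetry between the two copies deliberately — for instance color the within-copy-$1$ cycle edges with color $1$ but the within-copy-$2$ cycle edges with color $2$, reserving, say, ``color $0$ vs.\ nonzero'' on the cross edges to encode the matching; then no automorphism can swap the copies at all (they have different color profiles), the matching forces equal rotations, and a reflection is excluded because a reflection of the $n$-cycle reverses orientation while the aligned matching together with $n \ge 3$ leaves no room for it. With the copies made distinguishable, the whole verification collapses to the elementary fact that the only permutations of $\mathbb{Z}_n$ preserving both the cyclic adjacency and the identity-matching to a second labelled cycle are the rotations, giving $Aut(G) = \langle\sigma\rangle \cong \row{C_n}{2}$ and hence $\row{C_n}{2} \in GR(3)$.
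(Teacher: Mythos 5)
There is a genuine gap, and it sits exactly where you anticipated trouble: in ruling out reflections. Your final construction (cycle on copy $1$ in color $1$, cycle on copy $2$ in color $2$, the \emph{aligned} matching $\{(i,1),(i,2)\}$ in a nonzero color, everything else color $0$) does make the two copies non-swappable and does force the action on copy $2$ to equal the action on copy $1$. But it does not exclude reflections. The parallel reflection $\tau(i,t)=(-i \bmod n,\, t)$ preserves both cycle colorings (a reflection of $\mathbb{Z}_n$ maps consecutive pairs to consecutive pairs) \emph{and} preserves the aligned matching, since $\{(i,1),(i,2)\}$ is sent to $\{(-i,1),(-i,2)\}$, which is again a matching edge. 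So your closing claim --- that the only permutations of $\mathbb{Z}_n$ preserving the cyclic adjacency and the identity-matching are the rotations --- is false: the automorphism group of your graph is the parallel dihedral group $\row{D_n}{2}$, not $\row{C_n}{2}$. The aligned matching is itself reflection-symmetric, so it cannot serve as the orientation-breaking gadget, no matter how you distribute the three colors among the cycles and that single matching.

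The missing idea is to put a \emph{second, shifted} matching between the two copies in a different color. The paper's graph uses the cycle $\{v_i,v_{i+1}\}$ and the aligned matching $\{v_i,w_i\}$ in color $1$, and the shifted matching $\{v_i,w_{i+1}\}$ in color $2$ (with no cycle on the second copy at all; degrees then separate the two orbits). A reflection fixing $v_0$ must send the unique $2$-neighbor $w_1$ of $v_0$ to $w_{n-1}$, which is not a $2$-neighbor of $v_0$ once $n\geq 3$; this is precisely the asymmetric device that kills $\row{D_n}{2}\setminus\row{C_n}{2}$. Your write-up correctly identifies every other ingredient (separating the orbits, forcing parallel action, reducing to $C_n$ versus $D_n$), but without some such shift --- or another genuinely orientation-sensitive cross structure --- the last and decisive step fails.
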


\begin{proof} 
We construct a 3-colored graph $G=(V,E)$. 
Let $V = O_1 \cup O_2$, where the sets $O_1=\{v_0, \ldots, v_{n-1}\}$ and $O_2=\{w_0, \ldots, w_{n-1}\}$ are the orbits of $\row{C_n}{2}$. 
\begin{eqnarray}
\nonumber E(\{v,w\})&=&\left\{
\begin{array}{cl}
1 & \textrm{for } \{v,w\} = \{v_i,v_{i+1}\}, \\ 
1 & \textrm{for } \{v,w\} = \{v_i,w_i\}, \\ 
2 &  \textrm{for } \{v,w\} = \{v_i,w_{i+1}\}, \\
0 & \textrm{otherwise}.
\end{array}\right.
\end{eqnarray} 

(Throughout the paper, the addition in the indices is modulo $n$; thus $(n-1)+1=0$. In Figure~\ref{nietr} an example for $n=3$ is presented.)

\begin{figure}
    \psset{linewidth=0.03, xunit=2cm, yunit=1.5cm}
\begin{pspicture}(6,7)(0,0)

\cnode(3,1){0.1}{b1}
\cnode(2,2.8){0.1}{b2}

\cnode(4,2.8){0.1}{b3}

\cnode(1,1){0.1}{c1}
\cnode(5,1){0.1}{c2}
\cnode(3,4.6){0.1}{c3}

\psset{linewidth=0.05}
\ncline[linecolor=red, linestyle=dashed]{-}{b2}{c1}
\ncline[linecolor=red, linestyle=dashed]{-}{b3}{c3}
\ncline[linecolor=red, linestyle=dashed]{-}{b1}{c2}

\psset{linewidth=0.05}
\ncline[linecolor=black]{-}{b2}{b1}
\ncline[linecolor=black]{-}{b2}{b3}
\ncline[linecolor=black]{-}{b3}{b1}

\ncline[linecolor=black]{-}{c1}{b1}
\ncline[linecolor=black]{-}{b3}{c2}
\ncline[linecolor=black]{-}{b2}{c3}

\psset{linewidth=0.04}
\cnode[linecolor=black](1,0.4){0}{qq}
\cnode[linecolor=black](1.7,0.4){0}{ww}
\ncline[linecolor=black]{-}{qq}{ww}

\rput(2.1,0.44){color 1}
\psset{linewidth=0.05}
\cnode[linecolor=red, linestyle=dashed](1,0.1){0}{qq2}
\cnode[linecolor=red, linestyle=dashed](1.7,0.1){0}{ww2}

\ncline[linecolor=red, linestyle=dashed]{-}{qq2}{ww2}

\rput(2.1,0.14){color 2}

\end{pspicture}
\caption{A $3$-edge-colored graph $G$ such that $Aut(G) = \row{C_3}{2}$.}\label{nietr}
\end{figure}
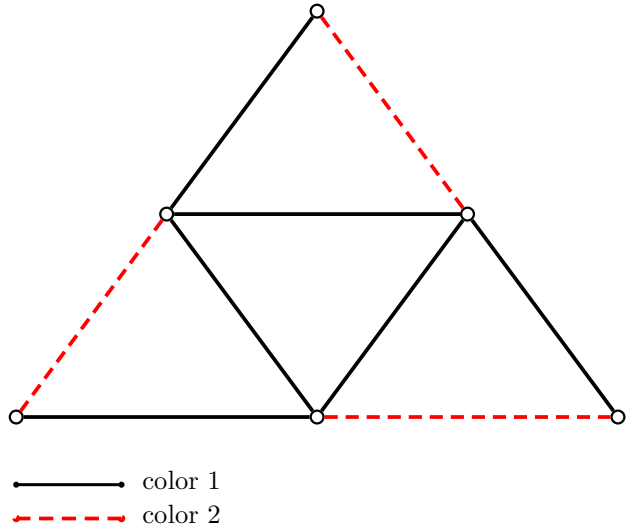

It is clear that $Aut(G) \supseteq \row{C_n}{2}$. 
We prove the opposite inclusion. 
A vertex that belongs to $O_1$ has $1$-degree equal to $3$. 
A vertex that belongs to $O_2$ has $1$-degree equal to $1$. 
Therefore, $Aut(G)$ preserves the sets $O_1$ and $O_2$. 
Since, $Aut(G)$ is transitive on $O_1$, and each member of $O_1$ has exactly one $1$-neighbor that belongs to $O_2$, the action of $Aut(G)$ on $O_1$ and $O_2$ is parallel. Since a subgraph of $G$ spanned by $O_1$ is a $|O_1|$-cycle, $Aut(G)$ is either $\row{C_n}{2}$ or $\row{D_n}{2}$. 
We have to exclude at least one member of $\row{D_n}{2} \setminus \row{C_n}{2}$.
We choose a permutation 
$$\s = (v_1, v_{n-1})(v_2,v_{n-2}) \cdots (v_{\lfloor\frac{n-1}{2}\rfloor},v_{\lceil\frac{n+1}{2}\rceil}) 
(w_1, w_{n-1}) \cdots (w_{\lfloor\frac{n-1}{2}\rfloor},w_{\lceil\frac{n+1}{2}\rceil}).
$$

The permutation $\s$ fixes the vertex $v_0$. 
The vertex $w_1$ is $2$-neighbor of $v_0$ and $w_{n-1}$ is not. Therefore, $\s \notin Aut(G)$, and consequently, $Aut(G) =  \row{C_n}{2}$.
\end{proof}

By Theorem~\ref{suma2}, we have 
\begin{Corollary}
If $q \geq 0$, then $\row{C_n}{2} \oplus I_m \in GR(3).$
\end{Corollary}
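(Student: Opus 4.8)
The plan is to obtain this as an immediate consequence of Lemma~\ref{p3} and part~(3) of Theorem~\ref{suma2}. First I would treat the degenerate case $m=0$ separately: there $\row{C_n}{2}\oplus I_m$ is literally $\row{C_n}{2}$, which Lemma~\ref{p3} already places in $GR(3)$, so there is nothing to prove in that case.

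For $m\geq 1$ the argument runs as follows. By Lemma~\ref{p3} we have $\row{C_n}{2}\in GR(3)$, and since $3\geq 2$, Theorem~\ref{suma2}(3) applies with $A_1=\row{C_n}{2}$ and $k=3$, yielding $\row{C_n}{2}\oplus I_m\in GR(3)\cup\{I_2\}$. It then remains only to exclude the alternative $\row{C_n}{2}\oplus I_m=I_2$. This is settled by a trivial cardinality count: $I_2$ acts on two points, whereas $\row{C_n}{2}\oplus I_m$ acts on $2n+m\geq 2\cdot 3+1=7$ points (and already on at least $6$ points when $m=0$), so the two permutation groups cannot be permutation isomorphic. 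Hence $\row{C_n}{2}\oplus I_m\in GR(3)$.

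I do not expect any genuine obstacle here; the proof is essentially bookkeeping, quoting the two previous results correctly. The only points deserving a moment's attention are checking the hypothesis $k\geq 2$ of Theorem~\ref{suma2} (satisfied because $\row{C_n}{2}$ lies in the concrete class $GR(3)$, not merely in $GR$) and noticing that the exceptional value $I_2$ appearing in part~(3) is eliminated purely by counting the number of points moved. If one prefers a self-contained phrasing that avoids even this small case distinction, one can instead note that the $m=0$ instance is Lemma~\ref{p3} itself and present the $m\geq 1$ instance as the content of the corollary, with the $I_2$ exclusion inlined as above.
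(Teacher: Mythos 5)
Your proposal is correct and follows exactly the paper's route: the paper derives this corollary directly from Theorem~\ref{suma2} applied to Lemma~\ref{p3}, and your filling in of part~(3) together with the cardinality argument excluding the exceptional value $I_2$ is precisely the bookkeeping the paper leaves implicit.
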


As it was observed in \cite{mss2}, $\row{C_p}{2} \notin GR(2)$ for $p = \{3,5\}$. 
We give a little more detailed proof of this fact showing at the same time that $\row{C_4}{2} \notin GR(2)$. 
This will be also helpful in the next section where we give the similar, but less detailed, proof of a similar lemma.

\begin{Lemma}\label{nie3}
$\row{C_n}{2} \notin GR(2)$ for $n \in \{3,4,5\}$. 
\end{Lemma}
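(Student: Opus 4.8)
The plan is to show that any $2$-colored (i.e. simple) graph $G$ whose automorphism group contains $\row{C_n}{2}$ must in fact admit a strictly larger automorphism group, hence $\row{C_n}{2}\notin GR(2)$ for $n\in\{3,4,5\}$. The idea is to exploit the rigidity of simple graphs on only two orbits of small prime-ish size: having just two colors leaves very little room to break the extra symmetry that a cyclic action on a cycle (or its complement, an equally symmetric graph) inevitably carries. In particular, as already recalled in the excerpt, whenever $Aut(G)$ contains $C_n$ acting on a single orbit of size $n\ge 3$, it contains $D_n$; the whole difficulty with $\row{C_n}{2}$ is the interaction between the two orbits $O_1,O_2$.

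First I would set up the standard reduction. Let $G=(V,E)$ be a simple graph with $\row{C_n}{2}\subseteq Aut(G)$, and let $O_1,O_2$ be the two orbits, each of size $n$, with $O_1=\{v_0,\dots,v_{n-1}\}$, $O_2=\{w_0,\dots,w_{n-1}\}$ and the generator acting as $v_i\mapsto v_{i+1}$, $w_i\mapsto w_{i+1}$ (indices mod $n$). The ``bipartite part'' of $G$ between $O_1$ and $O_2$ is determined by the set $S=\{j:\{v_0,w_j\}\in E\}$, since $\{v_i,w_k\}\in E$ iff $k-i\in S$; similarly the graph induced on $O_1$ is a circulant determined by a symmetric subset $S_1\subseteq \mathbb{Z}_n\setminus\{0\}$, and likewise $S_2$ for $O_2$. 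So a $2$-colored $G$ with this automorphism group corresponds to a triple $(S_1,S_2,S)$ with $S_1,S_2$ symmetric. For each of the finitely many choices — which for $n=3$ is genuinely small, for $n=4,5$ still a short enumeration — I would produce an explicit nontrivial automorphism outside $\row{C_n}{2}$.

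The key structural step is to handle the generic case uniformly rather than case by case: consider the ``reflection'' $\s$ fixing $v_0$ and $w_0$ (the one written in the proof of Lemma~\ref{p3}, extended to fix $w_0$ and send $w_i\mapsto w_{-i}$). This $\s$ automatically preserves any circulant on $O_1$ and on $O_2$ (reflections preserve symmetric connection sets), so the only obstruction to $\s\in Aut(G)$ is the bipartite part: $\s$ preserves the bipartite edges iff $S=-S$, i.e. $S$ is symmetric in $\mathbb{Z}_n$. Thus the only graphs not killed by this single reflection are those whose bipartite connection set $S$ is \emph{not} symmetric. For such $S$ I would use a different extra symmetry: swapping the two orbits via $v_i\leftrightarrow w_{i+c}$ for a suitable shift $c$, or a ``twisted reflection'' $v_i\mapsto w_{-i}$, $w_i\mapsto v_{-i}$; whether a swap works depends on whether $S_1=S_2$ and whether the swap can be aligned with $S$, and one checks that for $n\in\{3,4,5\}$ the remaining non-symmetric-$S$ cases all have enough coincidences (forced by the small size and by only two colors) that such a swap or twisted reflection lies in $Aut(G)$. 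Since in either branch we have exhibited an automorphism not in $\row{C_n}{2}$, and an element preserving both orbits at that, we conclude $\row{C_n}{2}\notin GR(2)$; moreover the ``orbit-preserving'' strengthening lets us feed this into Lemma~\ref{trywialne}.

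The main obstacle I expect is exactly the non-symmetric-$S$ branch: when $S\ne -S$ the convenient reflection is unavailable, and one must argue that the orbit swap (or twisted reflection) really is a graph automorphism, which forces compatibility conditions between $S_1$, $S_2$, and $S$. For $n=3$ this is trivial (every nonempty proper symmetric subset of $\mathbb{Z}_3\setminus\{0\}$ is all of it, and $|S|\in\{0,1,2,3\}$ with the $|S|\in\{1,2\}$ cases handled by a swap); for $n=4$ one must watch the element of order $2$ and the fact that $\mathbb{Z}_4$ has a non-symmetric singleton-free structure is limited; for $n=5$ the non-symmetric $S$ have $|S|\in\{1,4\}$ or $|S|\in\{2,3\}$ with $S$ not closed under negation, and in each such case the induced circulants on $O_1,O_2$ are forced (by two-colorability and the degree bookkeeping already used in Lemma~\ref{p3}) to be either both empty or both the $5$-cycle, after which the orbit swap $v_i\leftrightarrow w_{\phi(i)}$ for the appropriate affine $\phi$ works. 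I would organize the write-up around the dichotomy ``$S$ symmetric $\Rightarrow$ reflection $\s$; $S$ not symmetric $\Rightarrow$ swap'', reducing the casework to a handful of explicit small checks.
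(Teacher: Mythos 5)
Your reduction to connection sets is the right frame, and it is essentially the paper's: the paper likewise observes that any coloring of the edge-orbits inside $O_1$ and inside $O_2$ is preserved by all of $D_n\oplus D_n$, so everything hinges on the bipartite connection set $S=\{j:\{v_0,w_j\}\in E\}$, and it then exhibits, for each such coloring up to shift and color swap, an explicit element of $(D_n\oplus D_n)\setminus \row{C_n}{2}$. Your symmetric branch ($S=-S$, central reflection) is fine. The gap is in your non-symmetric branch. An orbit swap or twisted reflection $v_i\mapsto w_{\ast}$, $w_j\mapsto v_{\ast}$ is an automorphism only if the circulant induced on $O_1$ is carried onto the circulant induced on $O_2$, which forces $S_1=S_2$; nothing forces this. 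Your claim that two-colorability and degree bookkeeping force the induced circulants ``to be either both empty or both the $5$-cycle'' is false: $S_1$ and $S_2$ are completely independent parameters of the graph. Concretely, for $n=3$ take $S_1=\{1,2\}$ (a triangle on $O_1$), $S_2=\emptyset$, $S=\{1\}$; then $S\neq-S$, no swap or twisted reflection can be an automorphism (the degrees already distinguish the orbits), the central reflection fails, and your case analysis produces nothing for this graph.

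The missing idea is that the two reflections need not share a center: take $v_i\mapsto v_{-i}$ together with $w_j\mapsto w_{c-j}$ for a suitable shift $c$. This pair lies in $(D_n\oplus D_n)\setminus\row{C_n}{2}$, preserves both orbits and both circulants automatically, and preserves the bipartite part iff $S=c-S$. Such a $c$ exists for every $S\subseteq{\mathbb Z}_n$ with $\min(|S|,n-|S|)\le 2$ (for $S=\{a\}$ take $c=2a$, for $S=\{a,b\}$ take $c=a+b$, and pass to complements otherwise), hence for \emph{all} $S$ when $n\le 5$; this is also exactly where the argument must, and does, break down for larger $n$ (e.g.\ $S=\{0,1,3\}\subseteq{\mathbb Z}_7$ admits no such $c$, consistently with Theorem~\ref{prime>5}). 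With this replacement your dichotomy collapses to a single uniform case and the lemma follows, with an orbit-preserving witness as needed for Lemma~\ref{trywialne}; in the paper this role is played by the additional permutation $\tau$ handling the one exceptional coloring for $n=4$. In the counterexample above, $(v_1,v_2)(w_0,w_2)$ does the job.
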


\begin{proof}
Let $O_1 = \{v_0, \ldots, v_{n-1}\}$, $O_2 = \{w_0, \ldots, w_{n-1}\}$ be the orbits of $\row{C_n}{2}$. 
We consider an action of $\row{C_n}{2}$ on the set of the edges. 
We have three types of orbits under this action. 
An orbit of the first type consists of the edges of the form $\{v_i,v_j\}$. 
An orbit of the second type consists of the edges of the form $\{w_i,w_j\}$. 
It is obvious that any coloring of these two types of orbits is preserved by the group $D_n \oplus D_n$. 
Finally, we have $n$ orbits consisting the edges of the form $\{v_i,w_j\}$. 
These orbits are represented by the edges $\{v_0,w_j\}$, $j \in \{0, \ldots, n-1\}$. 

We say that two colorings of the orbits of the third type are similar if one coloring can be obtained from the other by permutation numbers of colors and cyclic permutation the elements of $O_2$. 
Then, we have two non-similar colorings of the orbits of the third type for $n=3$, either all orbits in the same color or the orbit represented by the edge $\{v_0,w_0\}$ in one color and two other orbits in another color. 
For $n \in \{4,5\}$, we have four non-similar colorings, represented by the following. 
\begin{itemize}
    \item[1.] All orbits in the same color.
\item[2.] An orbit represented by the edge $\{v_0, w_0\}$ in one color and the rest of the orbits in another color. 
\item[3.] The orbits represented by the edges $\{v_0,w_2\}$, and $\{v_0,w_3\}$ in one color and the rest of the orbits in another color. 
\item[4.] The orbits represented by the edges $\{v_0,w_1\}$, and $\{v_0,w_{n-1}\}$ in one color and the rest of the orbits in another color.
\end{itemize}  

It is easy to see that, with one exception, all these colorings are preserved by permutation 
$$\s = (v_1,v_2)(w_1,w_2) \textrm{ for } n=3,$$ 
$$\s = (v_1,v_3)(w_1,w_3) \textrm{ for } n=4,$$  
and  $$\s = (v_1,v_4)(v_2,v_3)(w_1,w_4)(w_2,w_3) \textrm{ for } n=5.$$
The exceptional case is the third coloring for $n=4$. 
However, in this case, the coloring is preserved by permutation 
$$\tau = (v_0,v_1)(v_2,v_3)(w_0,w_1)(w_2,w_3).$$
Obviously, $\{\s,\tau\} \subset (D_n \oplus D_n) \setminus \row{C_n}{2}$. 
Therefore, $\row{C_n}{2} \notin GR(2)$. 
\end{proof}

By Lemma~\ref{trywialne}, we have the following. 
\begin{Corollary}
$\row{C_n}{2} \oplus I_m \notin GR(2)$ for $n \in \{3,4,5\}$ and $m \geq 0$. 
\end{Corollary}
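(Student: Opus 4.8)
$\row{C_n}{2} \oplus I_m \notin GR(2)$ for $n \in \{3,4,5\}$ and $m \geq 0$.

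The claim follows immediately by combining Lemma~\ref{nie3} with Lemma~\ref{trywialne}, so the work is entirely in checking that the hypotheses of Lemma~\ref{trywialne} are met. Concretely, I would apply Lemma~\ref{trywialne} with $k=2$ and $B = \row{C_n}{2}$: Lemma~\ref{nie3} already gives $B \notin GR(2)$, so it remains to verify that for \emph{every} $2$-colored graph $G$ with $\row{C_n}{2} \subseteq Aut(G)$ there is a permutation $f \in Aut(G) \setminus \row{C_n}{2}$ that preserves the two orbits $O_1, O_2$ of $\row{C_n}{2}$. Once this is established, Lemma~\ref{trywialne} yields $\row{C_n}{2} \oplus C \notin GR(2)$ for every permutation group $C$; taking $C = I_m$ for $m \geq 1$ gives the stated corollary, and the case $m = 0$ is just Lemma~\ref{nie3} itself.

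The plan for the orbit-preserving hypothesis is to reread the proof of Lemma~\ref{nie3} and observe that the exhibited non-identity automorphisms are already orbit-preserving. In that proof, any $2$-colored graph $G$ containing $\row{C_n}{2}$ has its edge set partitioned into $\row{C_n}{2}$-orbits of three types (inside $O_1$, inside $O_2$, between $O_1$ and $O_2$), the first two types are preserved by all of $D_n \oplus D_n$, and the between-type coloring is, up to similarity, one of the finitely many listed colorings. For each such coloring one of the explicitly written permutations $\s$ or $\tau$ lies in $Aut(G)$. The key point is that every one of these permutations — the various $\s$ for $n = 3, 4, 5$ and the auxiliary $\tau$ for the exceptional $n=4$ coloring — is a product of transpositions interchanging vertices within $O_1$ and within $O_2$, hence fixes $O_1$ and $O_2$ setwise; and each lies in $(D_n \oplus D_n) \setminus \row{C_n}{2}$, so in particular it is not in $\row{C_n}{2}$. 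Thus the required $f$ can be taken to be exactly the permutation produced in the proof of Lemma~\ref{nie3}, after (if necessary) conjugating back from the ``similar'' representative to the actual coloring of $G$ by a power of the generator of $C_n$ acting diagonally on $O_2$, which again preserves both orbits.

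The main (minor) obstacle is bookkeeping: one must make sure that the reduction ``up to similarity'' in Lemma~\ref{nie3} does not secretly change which orbits are preserved. Similarity there is generated by renumbering colors (irrelevant to orbit structure) and by a cyclic shift of the elements of $O_2$ (which is $(1,\dots,1,\gamma)$-type, i.e.\ a diagonal power of $C_n$ restricted to $O_2$ — orbit-preserving). So conjugating the orbit-preserving automorphism of the representative coloring by such a shift yields an orbit-preserving automorphism of $G$'s actual coloring, still outside $\row{C_n}{2}$ since $\row{C_n}{2}$ is normalized by that shift. Hence the hypothesis of Lemma~\ref{trywialne} holds for all three values of $n$, and the corollary follows.

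\begin{proof}
We apply Lemma~\ref{trywialne} with $k=2$ and $B = \row{C_n}{2}$, which has orbits $O_1 = \{v_0,\dots,v_{n-1}\}$ and $O_2 = \{w_0,\dots,w_{n-1}\}$. By Lemma~\ref{nie3}, $B \notin GR(2)$. Let $G$ be any $2$-colored graph with $B \subseteq Aut(G)$. As in the proof of Lemma~\ref{nie3}, the coloring of the $B$-orbits on edges is determined by a coloring of the three types of orbits, and after renumbering colors and applying a suitable cyclic shift of $O_2$ (a permutation that preserves $O_1$ and $O_2$ and normalizes $B$), the coloring of the third-type orbits is one of the finitely many representatives listed there. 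For each such coloring the proof of Lemma~\ref{nie3} exhibits a permutation $\s$ (or, in the exceptional case $n=4$ with the third coloring, $\tau$) lying in $(D_n \oplus D_n) \setminus B$ and fixing the coloring of $G$. Each such $\s$ and $\tau$ is a product of transpositions acting within $O_1$ and within $O_2$, hence preserves the orbits $O_1$ and $O_2$; conjugating it back by the cyclic shift used above (which preserves $O_1$, $O_2$ and normalizes $B$) gives a permutation $f \in Aut(G) \setminus B$ preserving all orbits of $B$. Thus the hypotheses of Lemma~\ref{trywialne} are satisfied, and we conclude $\row{C_n}{2} \oplus C \notin GR(2)$ for every permutation group $C$; in particular $\row{C_n}{2} \oplus I_m \notin GR(2)$ for $m \geq 1$. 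For $m=0$ the claim is Lemma~\ref{nie3}.
\end{proof}
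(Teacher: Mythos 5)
Your proposal is correct and follows exactly the route the paper takes: the paper derives this corollary by applying Lemma~\ref{trywialne} to Lemma~\ref{nie3}, and your verification that the permutations $\s$ and $\tau$ from the proof of Lemma~\ref{nie3} preserve the orbits $O_1$ and $O_2$ (and survive the ``up to similarity'' reduction) is precisely the detail the paper leaves implicit.
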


In \cite{mss1}, it is noted (without a proof) that there exists no permutation group of prime order $p \in \{3,5\}$ which belongs to $GR(2)$. 
However, for a larger number of nontrivial orbits we have the following.

\begin{Lemma}\label{p2}
Let $r>2$. Then, $\row{C_n}{r} \in GR(2)$ for every $n \geq 3$. 
\end{Lemma}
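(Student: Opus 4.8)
The plan is to build a $2$-colored (i.e.\ ordinary simple) graph $G$ on the vertex set $V = O_1 \cup O_2 \cup \dots \cup O_r$, where each $O_j = \{ v^j_0, \ldots, v^j_{n-1} \}$ is a copy of the orbit on which $C_n$ acts, so that $\row{C_n}{r} \subseteq Aut(G)$ by construction and equality holds. The construction from Lemma~\ref{p3} already gives, with three colors, a graph whose automorphism group is $\row{C_n}{2}$; here we have $r > 2$ extra copies to play with, and the idea is to trade the third color for the third (and further) parallel orbits. A natural first attempt: within each $O_j$ put the $n$-cycle edges $\{ v^j_i, v^j_{i+1}\}$ in color $1$; between consecutive orbits $O_j$ and $O_{j+1}$ put a ``straight'' perfect matching $\{ v^j_i, v^{j+1}_i \}$ in color $1$ for $j = 1, \ldots, r-2$; and between $O_{r-1}$ and $O_r$ put a ``shifted'' matching $\{ v^{r-1}_i, v^r_{i+1} \}$ in color $1$. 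All other edges get color $0$ (``missing''). Then $\row{C_n}{r} \subseteq Aut(G)$ is immediate, and one orbit, say $O_r$, is distinguished from the rest by the shift.

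The key steps, in order, are: (i) verify $\row{C_n}{r}\subseteq Aut(G)$, which is routine from the translation-invariance of the construction; (ii) show that $Aut(G)$ preserves each orbit $O_j$ setwise --- this should follow from a degree argument, since the $1$-degrees of vertices differ from orbit to orbit (endpoints of the two end-orbits $O_1$, $O_r$ have smaller $1$-degree than interior ones, and if needed one perturbs the construction, e.g.\ attaches a small private gadget to one orbit, to make all the $1$-degrees pairwise distinct); (iii) deduce that $Aut(G)$ acts on each $O_j$ as a subgroup of $D_n$ (the subgraph spanned by $O_j$ is an $n$-cycle), and that these actions are \emph{coupled} into a parallel action, because the color-$1$ matchings between consecutive orbits are perfect matchings, so an automorphism's action on $O_{j+1}$ is determined by its action on $O_j$; hence $Aut(G) \subseteq \row{D_n}{r}$; (iv) exclude every element of $\row{D_n}{r}\setminus\row{C_n}{r}$: such an element acts on each orbit simultaneously as the \emph{same} reflection $x \mapsto -x + c$ of $\mathbb{Z}_n$, and the shifted matching between $O_{r-1}$ and $O_r$ is not preserved by any reflection applied in parallel to both (exactly as in the $\s \notin Aut(G)$ step of Lemma~\ref{p3}). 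Combining (iii) and (iv) gives $Aut(G) = \row{C_n}{r}$, and since $G$ uses only colors $0$ and $1$, we conclude $\row{C_n}{r} \in GR(2)$.

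The main obstacle I anticipate is step (ii): forcing $Aut(G)$ to respect the decomposition into orbits. With the bare construction above the interior orbits $O_2, \ldots, O_{r-1}$ all look alike locally (each vertex has $1$-degree $4$: two cycle-neighbors and two matching-neighbors), so a priori an automorphism could permute those orbits among themselves or even mix vertices across them. One must either argue that such a ``block permutation'' is impossible because it would have to preserve the alternating pattern of straight versus shifted matchings --- which it can, e.g.\ reversing the order $O_1, \ldots, O_{r-2}$ of the straight part is fine, but any symmetry of the ``linear arrangement of orbits'' still fixes $O_r$ since the shift singles it out, and then the parallel-action argument forces it to be a translation --- or, more robustly, one breaks the symmetry combinatorially: attach to $O_1$ (and, if the small values $n \in \{3,4,5\}$ need care, also distinguish $O_2$, etc.) a distinct number of ``pendant'' color-$1$ edges to fresh fixed vertices, changing its $1$-degree profile so the orbits become pairwise non-isomorphic as labelled pieces of $G$. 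One should double-check the small cases $n=3,4$ and small $r$ by hand --- these are exactly where Lemma~\ref{nie3} shows two copies are not enough, so the argument must genuinely use $r > 2$; concretely, the failure of $\row{C_n}{2}$ in $GR(2)$ came from the reflection $\s$ preserving every $2$-coloring of the cross-edges, and the point of the third parallel orbit with a shifted matching is precisely to create a color-$1$ configuration that no in-parallel reflection can fix.
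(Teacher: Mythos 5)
There is a genuine gap, and it is exactly at the point you flagged as the ``main obstacle,'' but the failure is worse than you anticipate: your graph $G$ simply does not have automorphism group $\row{C_n}{r}$. The problem is that a \emph{single} shifted matching sitting at the end of a path of straight matchings can be untwisted by relabelling one orbit, so it destroys neither the reflection nor the end-to-end symmetry. Concretely, the map $\psi(v^j_i)=v^j_{-i}$ for $j\le r-1$ and $\psi(v^r_i)=v^r_{-i+2}$ preserves every cycle, every straight matching, and sends the shifted edge $\{v^{r-1}_i,v^r_{i+1}\}$ to $\{v^{r-1}_{-i},v^r_{-i+1}\}$, which is again a shifted edge; so $\psi\in Aut(G)\setminus\row{C_n}{r}$. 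Your step (iii) is where this is hidden: the matchings do determine the action on $O_{j+1}$ from that on $O_j$, but through the \emph{shifted} matching a reflection on $O_{r-1}$ induces a \emph{different} (conjugated) reflection on $O_r$, so $Aut(G)$ is not contained in the parallel product $\row{D_n}{r}$, and step (iv), which only rules out reflections applied with the same centre on every orbit, does not apply to $\psi$. Similarly the orbit reversal $v^1_i\mapsto v^r_{i+1}$, $v^j_i\mapsto v^{r+1-j}_i$ ($2\le j\le r-1$), $v^r_i\mapsto v^1_{i-1}$ is an automorphism, so the shift does not ``single out $O_r$'' either. Your fallback repair --- attaching pendant edges to fresh vertices --- is not available: it enlarges the domain, so at best it represents some group $\row{C_n}{r}\oplus I_m$ (or not even that), not the permutation group $(\row{C_n}{r}, V)$ of the statement.

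The paper's construction differs in the two respects needed to avoid this. First, it places \emph{both} a straight and a shifted matching between the same pair of orbits $O_1$ and $O_2$ (edges $\{v^1_i,v^2_i\}$ and $\{v^1_i,v^2_{i+1}\}$), together with a straight matching $O_1$--$O_3$ and straight matchings between consecutive $O_j$. Two inequivalent matchings between the same pair cannot be simultaneously normalized away, and this is what kills the reflection: the reflection fixing $v^1_0$ must fix $v^2_0$, yet $v^2_0$ is adjacent to $v^1_{n-1}$ and not to $v^1_1$. Second, the $n$-cycle is placed on $O_1$ only, so the $1$-degrees ($5$ on $O_1$, $3$ on $O_2$, down to $1$ or $2$ on $O_r$) together with a short induction pin down every orbit setwise, eliminating the block-permutation problem rather than leaving it to a symmetry-of-the-arrangement argument. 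If you want to salvage your version, the minimal fix is of the same kind: put two matchings with different shifts between one fixed pair of orbits and break the orbit symmetry by degrees rather than by gadgets on new vertices.
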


\begin{proof} 
We construct a graph $G=(V,E)$.
Let $V = \bigcup_{i=1}^r O_i$, where the sets $O_i = \{ v_0^i, \ldots, v_{n-1}^i\}$, $i \in \{1, \ldots r\}$ are the orbits of the group $\row{C_n}{r}$.
$$E(\{v,w\})=\left\{
\begin{array}{cl}
1 & \textrm{for } \{v,w\} = \{v_i^1, v_{i+1}^1\}, \\ 
1 & \textrm{for } \{v,w\} = \{v_i^j, v_i^l\},  \textrm{ where either } l = j+1 \textrm{ or } j=1, l=3,\\ 
1 &  \textrm{for } \{v,w\} = \{v_i^1,v_{i+1}^2\}, \\
0 & \textrm{otherwise}.
\end{array}\right.$$

(See Figure~\ref{c33} for $n=3$ and $r=3$.)

\begin{figure}
    \psset{linewidth=0.03, xunit=2cm, yunit=1.5cm}
\begin{pspicture}(6,3.8)(0,0)

\cnode(3,0){0.1}{b1}
\cnode(2,1.8){0.1}{b2}

\cnode(4,1.8){0.1}{b3}

\cnode(1,0){0.1}{c1}
\cnode(5,0){0.1}{c2}
\cnode(3,3.6){0.1}{c3}

\cnode(4,0.6){0.1}{d1}
\cnode(2,0.6){0.1}{d2}
\cnode(3,2.4){0.1}{d3}

 \psset{linewidth=0.05}

\ncline[linecolor=black]{-}{b2}{c1}
\ncline[linecolor=black]{-}{b3}{c3}
\ncline[linecolor=black]{-}{b1}{c2}

\ncline[linecolor=black]{-}{b2}{b1}
\ncline[linecolor=black]{-}{b2}{b3}
\ncline[linecolor=black]{-}{b3}{b1}

\ncline[linecolor=black]{-}{c1}{b1}
\ncline[linecolor=black]{-}{b3}{c2}
\ncline[linecolor=black]{-}{b2}{c3}

\ncline[linecolor=black]{-}{d2}{b1}
\ncline[linecolor=black]{-}{d1}{b3}
\ncline[linecolor=black]{-}{d3}{b2}

\ncline[linecolor=black]{-}{c1}{d2}
\ncline[linecolor=black]{-}{d1}{c2}
\ncline[linecolor=black]{-}{d3}{c3}

\end{pspicture}
\caption{A $2$-egde-colored graph $G$ such that $Aut(G) = \row{C_3}{3}$.}\label{c33}
\end{figure}

It is clear that $Aut(G) \supseteq \row{C_n}{r}$. 
We prove the opposite inclusion. 
A vertex that belongs to $O_1$ has $1$-degree equal to $5$. 
A vertex that belongs to $O_2$ has $1$-degree equal to $3$. 
The rest of the vertices have $1$-degree at most $3$. 
Moreover, if $r \geq 4$, then a vertex that belongs to $O_r$ has $1$-degree equal to $1$, and if $r = 3$, then a vertex that belongs to $O_3$ has $1$-degree equal to $2$. 
Therefore, $Aut(G)$ preserves the sets $O_1$ and $O_r$. 
In addition, for $k >2$, a vertex that belongs to $O_k$ has exactly one $1$-neighbor that does not belong to $O_{k+1}$. This $1$-neighbor belongs to $O_{k-1}$. This proves, inductively, that $Aut(G)$ preserves all the sets $O_i$, $i \in \{1, \ldots, r\}$. 
Moreover, $Aut(G)$ acts parallelly on the sets $O_i$, $i \in \{2, \ldots, r\}$. 
Since a vertex that belongs to $O_3$ has exactly one $1$-neighbor that belongs to $O_1$, $Aut(G)$ acts parallelly on all the sets $O_i$, $i \in \{1, \ldots, r\}$. 
Since, a subgraph of $G$ spanned by $O_1$ is a $|O_1|$-cycle, $Aut(G)$ is either $\row{C_n}{r}$ or $\row{D_n}{r}$. 
It is enough to exclude one element  $\s \in \row{D_n}{r} \setminus \row{C_r}{2}$.
We choose a permutation 
$$\s = (v_1^1, v_{n-1}^1)(v_2^1,v_{n-2}^1) \cdots (v_{\lfloor\frac{n-1}{2}\rfloor}^1,v_{\lceil\frac{n+1}{2}\rceil}^1) \cdots
(v_1^r, v_{n-1}^r) \cdots (v_{\lfloor\frac{n-1}{2}\rfloor}^r,v_{\lceil\frac{n+1}{2}\rceil}^r).
$$
But $v_{n-1}^1$ is a $1$-neighbor of $v_0^2$ and $v_1^1$ is not. Moreover, $\s$ fixes $v_0^2$. 
Hence, $\s \notin Aut(G)$. 
This completes the proof of the lemma. 
\end{proof}

By Theorem~\ref{suma2}, we have
\begin{Corollary}
Let $r>2$ and $m \geq 0$. Then, $\row{C_n}{r} \oplus I_m \in GR(2).$
\end{Corollary}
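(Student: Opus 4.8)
The plan is to obtain this corollary immediately from Lemma~\ref{p2} together with part~(3) of Theorem~\ref{suma2}, so that essentially no new argument is required. First I would invoke Lemma~\ref{p2} to record that $\row{C_n}{r} \in GR(2)$ for every $r > 2$ and $n \geq 3$; in particular $\row{C_n}{r}$ lies in $GR(k)$ for $k = 2$, which is exactly the hypothesis needed in order to apply Theorem~\ref{suma2}(3) with $A_1 = \row{C_n}{r}$.

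Applying Theorem~\ref{suma2}(3) then yields $\row{C_n}{r} \oplus I_m \in GR(2) \cup \{I_2\}$ for every $m \geq 1$. The only remaining point is to check that the left-hand side is not the exceptional group $I_2$, and this is immediate: $\row{C_n}{r} \oplus I_m$ contains $\row{C_n}{r}$ as a subgroup, and the latter has order $n \geq 3$, so the whole group is certainly not the one-element group acting on two points. Hence $\row{C_n}{r} \oplus I_m \in GR(2)$ for all $m \geq 1$, and for $m = 0$ the statement reduces to Lemma~\ref{p2} itself. I do not anticipate any real obstacle here: all the substance lies in the explicit $2$-colored graph constructed in the proof of Lemma~\ref{p2}, and the corollary only packages that construction together with the standard reduction for adjoining fixed points provided by Theorem~\ref{suma2}.
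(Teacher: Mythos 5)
Your proposal is correct and is exactly the argument the paper intends: the paper derives this corollary in one line "By Theorem~\ref{suma2}" from Lemma~\ref{p2}, and you have simply spelled out the application of part~(3), the exclusion of the exceptional case $I_2$, and the trivial case $m=0$. No differences of substance.
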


Summarizing this section, we have the following description of the graphical complexity of cyclic permutation groups of prime order. 

\begin{Theorem} \label{pier}
Let $p$ be a prime, $r\geq 1$, $q\geq 0$ and $A = \row{C_p}{r} \oplus I_q$. Then, 
\begin{enumerate}
    \item $A \notin GR$, for $r=1$ and $p\ne 2$, 
\item $A \in GR^*(3)$, for $r=2$ and $p \in \{ 3,5 \}$, 
\item $A \in GR(2)$, otherwise. 
\end{enumerate}
\end{Theorem}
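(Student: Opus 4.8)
The plan is to obtain Theorem~\ref{pier} as a bookkeeping consequence of the material already assembled in this section: essentially every branch of the statement has been established in a lemma or cited theorem above, so all that remains is to organize the pieces into the stated trichotomy. I would argue by a case analysis on $r$, and inside each case on whether $p=2$ or $p\geq 3$. The one preliminary remark worth isolating first is that whenever $p=2$ the group $A=\row{C_2}{r}\oplus I_q$ is a permutation group of order exactly two, so Theorem~\ref{rz2} puts it in $GR(2)$ for all $r\geq 1$ and $q\geq 0$; hence the entire $p=2$ column falls under conclusion~(3), and from then on I may assume $p\geq 3$.

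For $r=1$ I would invoke the observation recorded just before Lemma~\ref{p3}: for any $k$-colored graph $G$ with $C_p\subseteq Aut(G)$ one has $D_p\subseteq Aut(G)$, hence $C_p\notin GR$. Since $A=C_p\oplus I_q$ is not the trivial two-point group $I_2$, Theorem~\ref{suma1} then forces $A\notin GR$, which is conclusion~(1). For $r=2$ I would split further. If $p\in\{3,5\}$, Lemma~\ref{p3} and the Corollary following it give $A=\row{C_p}{2}\oplus I_q\in GR(3)$, whereas Lemma~\ref{nie3} and the Corollary following it give $A\notin GR(2)$; together these say $A\in GR^*(3)$, conclusion~(2). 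If $p>5$, then $\row{C_p}{2}\in GR(2)$ by Theorem~\ref{prime>5}, and Theorem~\ref{suma2}(3) upgrades this to $A=\row{C_p}{2}\oplus I_q\in GR(2)\cup\{I_2\}$; as $A\neq I_2$ we land in $GR(2)$, conclusion~(3). Finally, for $r\geq 3$ (still $p\geq 3$, so $n=p\geq 3$ and $r>2$), Lemma~\ref{p2} and the Corollary following it give $A=\row{C_p}{r}\oplus I_q\in GR(2)$, again conclusion~(3). I would close by checking that the branches $p=2$; $(p\geq 3,\,r=1)$; $(p\in\{3,5\},\,r=2)$; $(p>5,\,r=2)$; $(p\geq 3,\,r\geq 3)$ are mutually exclusive and cover every admissible triple $(p,r,q)$.

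I do not expect a genuine obstacle at this stage, since all the hard content---the explicit graph constructions in Lemmas~\ref{p3} and~\ref{p2}, and the non-representability analysis in Lemma~\ref{nie3} and in the $D_p$-containment remark---is already in place. The only points that need a little care while writing the proof are: handling all $p=2$ instances uniformly through the ``order two'' observation rather than piecemeal; noting that the harmless alternative ``$\cup\,\{I_2\}$'' appearing in Theorem~\ref{suma2}(3) never actually occurs, because $A$ always contains the nontrivial cyclic factor $C_p$ and hence is never the trivial group on two points; and verifying that the case split is genuinely exhaustive, including the boundary values $q=0$, $r=1$, and $r=2$.
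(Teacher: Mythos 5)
Your proposal is correct and matches the paper's intent exactly: the paper offers no separate proof of Theorem~\ref{pier}, presenting it as a summary of Theorems~\ref{rz2} and~\ref{prime>5}, the remark that $C_n\notin GR$ for $n\geq 3$ combined with Theorem~\ref{suma1}, and Lemmas~\ref{p3}, \ref{nie3}, \ref{p2} together with their corollaries. Your case split is exhaustive and each branch cites the right ingredient, so nothing further is needed.
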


\section{Prime powers} \label{potegi}
In this section, 
we assume that the order of the cyclic permutation group $A$ is a prime power $p^n$ for some $n>1$. 
It is obvious that every orbit of $A$ has cardinality $p^i$, where $0 \leq i \leq n$. 
Moreover, there is at least one orbit of cardinality~$p^n$. 
In this section, we will always assume that a generator $a$ of $A$ acts on each orbit $O=\{v_1, \ldots, v_{p^i-1}\}$ in the following way. 
$$a(v_j) = a(v_{j+1}).$$
(Here, the addition in the indices is modulo $n = p^i$). 
The result \cite[Theorem~3]{mss2} can be rewritten in the following form.

\begin{Theorem}\cite[Theorem 3]{mss2} \label{poteg}
Let $A$ be a cyclic permutation group of order $p^n$, where $p>5$ is prime. 
Then, $A \in GR(2)$ if and only if $A$ has at least two nontrivial orbits.
\end{Theorem}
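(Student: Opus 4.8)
The plan is to prove the two implications separately.

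\emph{Forward direction.} Suppose $A$ has at most one nontrivial orbit. Since some orbit has cardinality $p^n$, this forces $A=C_{p^n}\oplus I_q$ for some $q\geq 0$. As $p^n\geq p^2>3$, it is recalled in the excerpt (just before Lemma~\ref{p3}) that $C_{p^n}\notin GR$ and $C_{p^n}\oplus I_q\notin GR$; hence $A\notin GR(2)$.

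\emph{Backward direction.} Assume $A$ has at least two nontrivial orbits. First I would discard the trivial orbits: writing $A=A'\oplus I_q$ with $A'$ having no trivial orbit (so $A'\neq I_1$, as $A'$ still has at least two nontrivial orbits), part~(3) of Theorem~\ref{suma2} gives that $A'\in GR(2)$ implies $A\in GR(2)\cup\{I_2\}=GR(2)$. So assume all orbits are nontrivial; write them $O_1,\dots,O_t$, $t\geq 2$, ordered by non-increasing cardinality, $|O_i|=p^{e_i}$, $e_1=n$, and identify $O_i$ with $\mathbb{Z}/p^{e_i}$ so that a fixed generator $a$ of $A$ acts as $x\mapsto x+1$ on each $O_i$; then $A$ acts faithfully on $O_1$ and acts on $O_i$ via reduction mod $p^{e_i}$. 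I would build a \emph{simple} graph $G$ on $V=\bigcup_i O_i$ with the following edges: (i) a $p^n$-cycle on $O_1$, joining $x$ to $x+1$; (ii) a skewed connection between $O_1$ and $O_2$, joining $x\in O_1$ to $y\in O_2$ whenever $y\equiv x$, $x+1$, or $x+3\pmod{p^{e_2}}$; (iii) for $2\leq i\leq t-1$, a straight connection between $O_i$ and $O_{i+1}$, joining $x\in O_i$ to $y\in O_{i+1}$ whenever $x\equiv y\pmod{p^{e_{i+1}}}$. Thus the orbits sit along a path $O_1,O_2,\dots,O_t$ whose only crooked link is the first. Clearly $A\subseteq Aut(G)$.

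To get $Aut(G)=A$ I would argue in three steps. \emph{Orbits are invariant:} a routine count gives every vertex of $O_1$ degree $5$, every vertex of $O_2$ degree $3p^{n-e_2}$ or $3p^{n-e_2}+1$ (according as $t=2$ or $t\geq 3$), every vertex of $O_i$ with $2<i<t$ degree $p^{e_{i-1}-e_i}+1$, and every vertex of $O_t$ (when $t\geq 3$) degree $p^{e_{t-1}-e_t}$; since $p>5$, no power of $p$ equals $4$ or $5$ and no number $3p^k$ equals $4$ or $5$, so $O_1$ is exactly the set of degree-$5$ vertices and is therefore $Aut(G)$-invariant, after which $O_2=N_G(O_1)\setminus O_1$ and, inductively along the path, $O_{i+1}=N_G(O_i)\setminus O_{i-1}$ are invariant as well. \emph{Reflections die:} the subgraph induced on $O_1$ is a $p^n$-cycle, so $Aut(G)|_{O_1}\subseteq D_{p^n}$; if some $\phi\in Aut(G)$ acted on $O_1$ as a reflection $x\mapsto d-x$, then (ii) would force $\phi|_{O_2}$ to carry $\{x,x+1,x+3\}$ onto $\{d-x,d-x+1,d-x+3\}$ for every $x\bmod p^{e_2}$; comparing the images of consecutive translates forces $\phi|_{O_2}$ to be the reflection $y\mapsto d+1-y$, and then the two requirements on $\phi|_{O_2}$ agree only if $\{0,1,3\}\equiv\{-2,0,1\}\pmod{p^{e_2}}$, i.e.\ $p^{e_2}=5$ — impossible for $p>5$. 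Hence $Aut(G)|_{O_1}\subseteq C_{p^n}$. \emph{Synchronization:} a rotation $x\mapsto x+c$ on $O_1$ determines $\phi|_{O_2}$ uniquely through (ii) (the only translation stabilizing $\{0,1,3\}\bmod p^{e_2}$ is the identity, since $3\nmid p^{e_2}$) and then determines each $\phi|_{O_{i+1}}$ uniquely through (iii); the resulting $\phi$ is $a^c\in A$. Thus $Aut(G)=A$, so $A\in GR(2)$.

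\emph{The main obstacle.} The delicate part is the second step — ruling out every reflection of the big cycle — together with keeping the degree bookkeeping of the first step correct when the orbit cardinalities coincide or interleave; this is exactly what the proof of \cite[Theorem~3]{mss2} glosses over. Both parts depend essentially on $p>5$: the gadget $\{0,1,3\}$ loses its rigidity in $\mathbb{Z}/3$ (where $3\equiv 0$, so (ii) degenerates) and in $\mathbb{Z}/5$ (where $\{0,1,3\}$ and $\{-3,-1,0\}$ are translates of one another, so a reflection survives), and the distinguishing degree $5$ collides with $p$ itself when $p=5$. This is precisely why the cases $p\in\{2,3,5\}$ are genuinely different and are treated by separate constructions elsewhere in the section.
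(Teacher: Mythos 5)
Your proof is correct and follows essentially the same route as the paper's: the same graph (a $p^n$-cycle on the largest orbit, the $\{0,1,3\}$-gadget joining it to the second orbit, and a chain of ``straight'' one-to-many links down the remaining orbits), the same identification of $O_1$ as the set of degree-$5$ vertices, and the same use of the asymmetry of $\{0,1,3\}$ modulo $p^{e_2}\geq 7$ to exclude reflections. The only cosmetic differences are that the paper shifts indices by one in the links between consecutive lower orbits, and that it pins down the stabilizer of $v_0^1$ via unique common neighbors rather than your translation-stabilizer parenthetical (which by itself only rules out translations, though the common-neighbor argument you already use for reflections supplies the full uniqueness).
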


Unfortunately, this theorem is not really proved in \cite{mss2}. 
The proof is based on \cite[Theorem~2]{mss2}, which in turn has an incorrect proof.
Examining carefully the proof one can find that what has been really proved is the following.

\begin{Theorem}\label{s³absza-wersja}
Let $A$ be a cyclic permutation group, $m_1, m_2, \ldots, m_r$ be the cardinalities of the orbits of $A$.
Let $m_1>5$ and $m_1$ divides $m_i$ for all $i \in \{1, \ldots, r\}$. 
If for every $i \ne j$, such that $\{i,j\} \subseteq \{1, \ldots, r\}$, either $m_i = m_j$ or $\gcd(m_i,m_j)=m_1$, then $A \in GR(2)$. 
\end{Theorem}

Theorem~\ref{s³absza-wersja} does not imply Theorem~\ref{poteg}. 
We complete now the proof of Theorem~\ref{poteg}

\vspace{2mm}

\noindent 
{\bf The proof of Theorem~$\ref{poteg}$}
By Theorem~\ref{suma2}, we may assume that $A$ has no nontrivial orbits. 
The part of the proof, in which $A$ has the only one nontrivial orbit is as in \cite{mss2}. 

Let  
$O_j=\{ v^j_0, \ldots, v^j_{p^{i_j}-1} \}$, where $j \in \{1, \ldots, r\}$, $i_j \leq n$ and $r$ is at least $2$, be the orbits of $A$.
We assume that $i_l \geq i_{l+1}$, for all $l$. It is clear that $i_1=n$.

We construct a graph $G=(V,E)$ such that $Aut(G)=A$. 
We define $V = \bigcup_{l=1}^r O_l$,  
$$ E(\{ v,w \}) = \left\{ 
\begin{array}{cl} 
1 & \hspace{-2mm} \textrm{for }  \{v,w\} = \{v_h^1,v_{h+1}^1\},  \\
1 & \hspace{-2mm}\textrm{for }  \{v,w\} = \{v_j^1,v_{j+e}^2\}, e \in \{ 0,1,3\}, \\ 
1 & \hspace{-2mm} \textrm{for } \{v,w\}=\{v_h^l,v_{h+1}^{l+1}\}, \\
0 & \hspace{-2mm} \textrm{otherwise}.
\end{array} \right. $$ 
(Again, the addition in the indices is modulo $n = p^n, p^{i_2}$, and $p^{i_{l+1}}$, respectively.)

It is clear that $Aut(G) \supseteq A$. 
We have to show the opposite inclusion. 
Since $A$ acts regularly on $O_1$, it is enough to show that $Aut(G)$ stabilizes the set $O_1$ and the stabilizer of $v_0^1$ is trivial. 
Note that $O_1$ consists of all the vertices that have $1$-degree equal to $5$. 
Therefore, $Aut(G)$ preserves the set $O_1$. 
Observe that the set $O_i$, $i\geq 2$, consists of all the $1$-neighbors of the members of $O_{i-1}$ that do not belong to $\bigcup_{j<i} O_j$. 
Therefore, inductively, $Aut(G)$ preserves all the sets $O_i$, $i \in \{1, \ldots, r\}$.

The subgraph $G$ spanned by $O_1$ is a $|O_1|$-cycle. 
Hence, the group induced on $O_1$ by $Aut(G)$ is equal either to $C_{|O_1|}$ or to $D_{|O_1|}$. 
We have to show that the former holds. 
Assume that $\s \in Aut(G)$ fixes $v_0^1$. 
We show that $\s$ is the trivial permutation. 
If $\s$ does not acts trivially on $O_1$, then $\s(v_1^1) = v_{p^n-1}^1$ and $\s(v_{p^n-1}^1)=v_1^1$. 
Moreover, this implies that $\s(v_2^1) = v_{p^n-2}^1$ and $\s(v_{p^n-2}^1)=v_2^1$. 
The vertex $v_0^2$ is the only common $1$-neighbor of $v_0^1$ and $v_{p^n-1}^1$. 
The vertex $v_1^2$ is the only common $1$-neighbor of $v_0^1$ and $v_1^1$. 
Therefore, $\s(v_0^2) = v_1^2$ and $\s(v_1^2)=v_0^2$. 
However, $v_1^2$ is a $1$-neighbor of $v_{p^n-2}^1$ but, since $p > 5$, $v_0^2$ is not a $1$-neighbor of $v_2^1$, a contradiction. 
Thus, $\s$ acts trivially on $O_1$. 
Now, observe that $v_i^2$ is the only common $1$-neighbor of $v_i^1$ and $v_{(i-1 \pmod {p^n})}$. 
Hence, $\s$ acts trivially also on the set $O_2$. 
Moreover, for $k \geq 2$ a vertex $v_i^{k+1}$ is the only $1$-neighbor of $v_i^k$ that belongs to $O_{k+1}$. 
Therefore, inductively, $\s$ act trivially on each set $O_k$.  
Thus, $Aut(G) = A$. 
\hfill \cnd

Observe that the only two places, where we use the fact that $p>5$ are the facts that the cardinality of $O_2$ is greater than five, and that that the $1$-degrees of the vertices that do not belong to $O_1$ is different from $5$. 
The second property, we may force also by alternatively coloring using color $1$ all the edges that are contained in some $O_i$. 
Hence, this proof works also under the assumption that there are at least two orbits with cardinality at least $7$ instead of $p>5$. 
Therefore, it remains to consider the case when there is exactly one orbit of the cardinality greater than $5$. 
The only exception is when all the orbits are of cardinality one, two or four. This case is considered later.

First, we exclude the case $p=2$, where there is no orbits of cardinality $2$. 
In this situation, we have exactly one orbit of the maximal cardinality $p^n$. 
The rest of the nontrivial orbits are of cardinality three, four or five.
We have the following.

\begin{Lemma}\label{potega3}
In the situation as above, if the permutation group $A$ has exactly two nontrivial orbits, then $A \in GR^*(3)$. 
\end{Lemma}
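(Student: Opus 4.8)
The plan is to split the statement into its two directions. The containment $A \in GR(3)$ should follow from Lemma~\ref{p3} together with Theorem~\ref{suma2}: since $A$ has exactly two nontrivial orbits, one of cardinality $p^n>5$ and one of cardinality $m\in\{3,4,5\}$, together with $q$ trivial orbits, we can write $A$ as a direct sum of pieces each of which is known to lie in $GR(3)$ (the generator acts with the same cycle length on the two nontrivial orbits only when $p^n=m$, which is impossible here, so the two nontrivial orbits form two distinct transitive constituents), and then parts (1) and (3) of Theorem~\ref{suma2} give $A\in GR(3)$. Care is needed if $q\in\{1,2\}$, but parts (2) and (3) of Theorem~\ref{suma2}, plus the remarks after it on $I_n$, handle the trivial summands.

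The substantive half is showing $A \notin GR(2)$, i.e.\ that no $2$-colored graph (simple graph) has automorphism group exactly $A$. Here I would argue exactly in the spirit of Lemma~\ref{nie3}: let $O_1$ be the big orbit (cardinality $p^n$, with $p^n\geq 7$) and $O_2$ the small one (cardinality $m\in\{3,4,5\}$), plus possibly fixed points. Given any simple graph $G$ with $A\subseteq\Au(G)$, I would use Lemma~\ref{trywialne} to reduce to the case with no fixed points, then analyze the $A$-orbits on edges: edges inside $O_1$, edges inside $O_2$, and the $p^n$ ``mixed'' orbits of edges joining $O_1$ to $O_2$ (since $|O_1|$ is a multiple of $|O_2|$, the mixed edge-orbits are indexed by the $p^n$ choices of which vertex of $O_1$ is joined to a fixed vertex $w_0\in O_2$, but the orbit of $\{v^1_0,w^2_j\}$ depends only on $j\bmod m$ — wait, more carefully, it depends only on the residue, so there are $m$ mixed orbits). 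The coloring inside $O_1$ is automatically preserved by $D_{p^n}$ acting on $O_1$, and the coloring inside $O_2$ by $D_m$ on $O_2$; the only constraint that could pin down $C_{p^n}$ rather than $D_{p^n}$ comes from the mixed orbits. So I must exhibit, for every choice of which of the $m$ mixed orbits get color $1$, a nontrivial element of $(D_{p^n}\oplus D_m)\setminus A$ (acting compatibly on both orbits, i.e.\ a suitable reflection of $O_1$ combined with a reflection of $O_2$) that preserves that set of mixed orbits.

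The key point making this possible is that $m\leq 5$: a reflection of the cycle on $O_1$ fixing $v^1_0$ sends the mixed-edge residue $j$ to $-j\bmod m$, so any union of mixed orbits that is symmetric under $j\mapsto -j$ is preserved by that single reflection; and when $m\leq 5$, by combining with an appropriate reflection of $O_2$ (which permutes the $m$ residues as a reflection of the $m$-cycle, i.e.\ also by $j\mapsto c-j$ for a chosen center $c$) one can make \emph{every} subset of $\mathbb{Z}_m$ invariant — indeed for $m=3$ every subset is invariant under some reflection of $\mathbb{Z}_3$, and for $m\in\{4,5\}$ the four non-similar colorings are exactly those classified in Lemma~\ref{nie3}, which were shown there to be preserved by an explicit reflection pair (with the one exceptional $m=4$ coloring handled by the shifted involution $\tau$). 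Thus in every case $\Au(G)\supsetneq A$, so $A\notin GR(2)$. Combining with $A\in GR(3)$ gives $A\in GR^*(3)$.

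The main obstacle I anticipate is the bookkeeping in the $A\notin GR(2)$ direction: one must verify that a reflection of $O_1$ and a reflection of $O_2$ can always be chosen to act \emph{simultaneously} compatibly (i.e.\ they must induce the same permutation of the $m$ mixed edge-classes as well as individually preserving the intra-orbit colorings, and the latter is free since $D$ preserves any coloring of the intra-orbit edge-orbits), and to rule out the possibility that the graph $G$ forces $O_1$ and $O_2$ \emph{not} to be setwise stabilized or forces some unexpected extra symmetry mixing the orbits — here the cardinality argument $|O_1|=p^n \neq m = |O_2|$ immediately shows no automorphism swaps orbit types, and the fixed points are dispatched by Lemma~\ref{trywialne}. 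So the real work is the finite case-check for $m\in\{3,4,5\}$, which is essentially a restatement of Lemma~\ref{nie3} adapted to the situation where $O_1$ has size $p^n$ instead of $m$; the reflections of the big cycle only ever see the residue mod $m$, so the earlier analysis transfers verbatim.
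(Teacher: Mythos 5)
Your second half --- the proof that $A \notin GR(2)$ --- is essentially the paper's own argument: reduce to the fixed-point-free case via Lemma~\ref{trywialne}, note that any $A$-invariant coloring of the edges inside $O_1$ (resp.\ $O_2$) is automatically invariant under the full dihedral group on that orbit, identify the $n_p\in\{3,4,5\}$ mixed edge-orbits by the residue $j-i \bmod n_p$, and check that every $2$-coloring of these residues is preserved by a pair of reflections $v_i\mapsto v_{c_1-i}$, $w_j\mapsto w_{c_2-j}$ (which act on residues by $s\mapsto (c_2-c_1)-s$, so one only needs a suitable $d=c_2-c_1$ for each of the finitely many colorings). That part is fine and matches the paper.

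The genuine gap is in the first half, $A \in GR(3)$. You assert that since the two nontrivial orbits have different cardinalities, $A$ decomposes as a direct sum of pieces in $GR(3)$, so that Theorem~\ref{suma2} applies. This is false. The transitive constituents of $A$ are $C_{p^n}$ on $O_1$ and $C_{n_p}$ on $O_2$ with $n_p \mid p^n$; their direct sum has order $p^n\cdot n_p$, whereas $|A|=p^n$, so $A$ is a proper subdirect product, not a direct sum ($A$ contains no element acting trivially on $O_1$ and nontrivially on $O_2$; a cyclic group is the direct sum of its constituents only when the cycle lengths are coprime, which never happens for two powers of the same prime). Moreover, even if such a decomposition existed it would not help: the constituent $C_{p^n}$ on a single orbit is not in $GR$ at all (as recalled before Lemma~\ref{p3}), and Lemma~\ref{p3} concerns the parallel product $\row{C_n}{2}$ of two orbits of \emph{equal} size, which is not the situation here. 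The paper instead proves $A\in GR(3)$ by an explicit construction: color $1$ on the edges $\{v_i,v_{i+1}\}$, $\{w_i,w_{i+1}\}$ and $\{v_i,w_i\}$ (second index mod $n_p$), color $2$ on the edges $\{v_i,w_{i+1}\}$, and color $0$ elsewhere; then the $2$-degrees distinguish $O_1$ from $O_2$ and the color-$2$ edges eliminate all reflections. Some such concrete construction is unavoidable here, and your proposal is missing it.
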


\begin{proof}
Using Theorem~\ref{suma2} and Lemma~\ref{trywialne}, we may assume that $(A,V)$ has no nontrivial orbit. 
Hence, $V = O_1 \cup O_2$, where $O_1 = \{v_0, \ldots, v_{p^n-1}\}$, $O_2 = \{w_0, \ldots, w_{p-1}\}$ for $p \in \{3,5\}$ and $O_2 = \{w_0,w_1,w_2,w_3\}$ for $p=2$.

First we show that $A \notin GR(2)$. 
The proof is similar to the proof of Lemma~\ref{nie3}.
We consider an action of $A$ on the set of the edges. 
We have three types of orbits under this action. 
An orbit of the first type consists of the edges of the form $\{v_i,v_j\}$. 
An orbit of the second type consists of the edges of the form $\{w_i,w_j\}$. 
It is obvious that any coloring of these two types of orbits is preserved by the group $D_{p^n} \oplus D_p$ for $p \in \{3,5\}$ and by the group $D_{2^n} \oplus D_4$ for $p=2$.

Now, we may color the orbits that consists the edges of the form $\{v_i,w_j\}$. 
As in the proof of Lemma~\ref{nie3}, we have non-similar colorings of these orbits. 
For $p=3$, there are two non-similar colorings. For $p=2$ and for $p=5$, there are four of them. 
The colorings are exactly of the same kind as in the proof of the Lemma~\ref{nie3}.
As before, every such a coloring is preserved by some permutation that belongs to $(D_{p^n} \oplus D_p) \setminus A$ for $p \in \{3,5\}$ and to $(D_{2^n} \oplus D_4) \setminus A$ for $p=2$. 
Thus, $A \notin GR(2)$. 

Now, we prove that $A \in GR(3)$. 
We define a $3$-colored graph $G=(V,E)$ such that $Aut(G) = A$. 
We have to define an edge function $E$. Let us denote $n_p=p$ for $p \in \{3,5\}$ and $n_p=4$ for $p=2$. Then
$$
E(v,w) = \left\{
\begin{array}{cl}
1& \textrm{for } \{v,w\} = \{v_i, v_{i+1}\}, \\
1& \textrm{for } \{v,w\} = \{w_i, w_{i+1}\},  \\
1& \textrm{for } \{v,w\} = \{v_i, w_{i}\},  \\
2& \textrm{for } \{v,w\} = \{v_i, w_{i+1}\},  \\
0& \textrm{otherwise.}
\end{array}
\right.
$$
(The addition in the indices is modulo $p^n$ in the first case, and modulo $n_p$ in the remaining cases).

It is clear that $Aut(G) \supseteq A$. 
Since $A$ acts regularly on $O_1$, it is enough to show that $Aut(G)$ preserves $O_1$ and the stabilizer of $v_0$ is trivial. 
A vertex of the form $v_i$ has exactly one $2$-neighbor. 
A vertex of the form $v_j$ has exactly $\frac{p^n}{n_p}$ $2$-neighbors. 
Since $p^n>n_p$ the partition on to orbits $O_1$ and $O_2$ is preserved by $Aut(G)$.

Let $\s \in Aut(G)$ that fixes $v_0$. 
The vertex $w_0$ is the only $1$-neighbor of $v_0$ that belongs to $O_2$. 
Thus, $\s$ fixes $w_0$. 
The vertices $v_1$ and $v_{p^n-1}$ are the only $1$-neighbors of $v_0$ that belong to $O_1$. 
Moreover, $v_{p^n-1}$ is $2$-neighbor of $w_0$ and $v_1$ is not. 
Hence, $\s$ fixes $v_1$ and $v_{p^n-1}$. 
We use an induction to show that $\s$ fixes all the vertices that belong to $O_1$. 
Assume that $\s$ fixes the vertices $v_0, v_1, \ldots, v_i$ for $i \geq 1$. 
The vertex $v_i$ has exactly two $1$-neighbors that belong to $O_1$, vertices $v_{i-1}$ and $v_{i+1}$. 
Since, by assumption $\s$ fixes $v_{i-1}$, it fixes also $v_{i+1}$. This completes the step of induction. 
Hence, $\s$ fixes every vertex that belongs to $O_1$.

Finally, a vertex $w_i$ is the only $1$-neighbor of the vertex $v_i$ that belongs to $O_2$. 
Hence, $\s$ fixes every vertex that belongs to $O_2$, and therefore, $\s$ is trivial. 
This completes the proof of the lemma.  
\end{proof}

We still assume that there is exactly one orbit of cardinality greater than five and there is no orbit of cardinality two. 

\begin{Lemma}\label{conajmniej-trzy}
With the assumptions above, if $A$ has at least three nontrivial orbits, then $A \in GR(2)$. 
\end{Lemma}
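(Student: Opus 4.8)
The plan is to exhibit $A$ as the full automorphism group of an explicit $2$-colored graph, by a construction that merges the ``chained parallel copies'' idea of Lemma~\ref{p2} with the ``straight-plus-shifted'' adjacency used in the proof of Theorem~\ref{poteg}, the new feature being that the orbits now split into one large orbit together with several small orbits of a common size. Under the standing assumptions such a group is necessarily of the form $A = C_{p^n}\oplus\row{C_m}{s}\oplus I_q$ with $p^n>5$, $s\ge 2$, $q\ge 0$, and $m\in\{3,4,5\}$ the common cardinality of the small nontrivial orbits ($m=p$ if $p\in\{3,5\}$, and $m=4$ if $p=2$); note $m\mid p^n$. By Theorem~\ref{suma2}(3) it suffices to treat the case $q=0$, since $C_{p^n}\oplus\row{C_m}{s}$ has order $p^n>2$. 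So I put $V=O_1\cup O_2\cup\dots\cup O_r$ with $O_1=\{v_0,\dots,v_{p^n-1}\}$ the large orbit, $O_l=\{w^l_0,\dots,w^l_{m-1}\}$ the small ones for $2\le l\le r$, and $r=s+1\ge 3$; the generator acts by $v_i\mapsto v_{i+1}$ and $w^l_j\mapsto w^l_{j+1}$, indices modulo $p^n$ and $m$ respectively.

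The graph $G$ I propose has as its $1$-edges, all other edges being $0$: the $p^n$-cycle $\{v_i,v_{i+1}\}$ on $O_1$; an internal $m$-cycle $\{w^l_j,w^l_{j+1}\}$ on each small orbit; the ``straight'' chain edges $\{w^l_j,w^{l+1}_j\}$ for $2\le l\le r-1$; one extra ``twisted'' chain edge $\{w^2_j,w^3_{j+1}\}$ between $O_2$ and $O_3$; between $O_1$ and $O_2$ \emph{both} families $\{v_i,w^2_{i\bmod m}\}$ and $\{v_i,w^2_{(i+1)\bmod m}\}$; and between $O_1$ and $O_3$ the single family $\{v_i,w^3_{i\bmod m}\}$. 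Every listed family is invariant under the generator, so $Aut(G)\supseteq A$; because $A$ is regular on $O_1$, for the reverse inclusion it is enough to check that $Aut(G)$ preserves $O_1$ and that the stabilizer of $v_0$ in $Aut(G)$ is trivial.

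For the first point, a $1$-degree count shows that $O_1$ is precisely the set of vertices of degree $5$, whereas every small-orbit vertex has degree different from $5$ (the internal $m$-cycles are inserted exactly to push the small degrees off $5$ in the tightest case $p=2$, $n=3$, where $p^n/m=2$); since $O_1$-vertices are adjacent among the small orbits only to $O_2$ and $O_3$, the group $Aut(G)$ preserves $O_2\cup O_3$, the degrees separate $O_2$ from $O_3$, and the linear chain $O_3-O_4-\dots-O_r$ then lets $Aut(G)$ peel off the remaining small orbits one at a time, so $Aut(G)$ preserves each $O_l$. Since the graph induced on $O_1$ is a $p^n$-cycle, $Aut(G)$ induces $C_{p^n}$ or $D_{p^n}$ on $O_1$, and it remains to rule out the dihedral case. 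If $\sigma\in Aut(G)$ fixes $v_0$ and reverses $O_1$, then, as $v_0$ has exactly the $O_2$-neighbours $w^2_0,w^2_1$, $\sigma$ acts on $O_2$ either as the identity or as the reflection $w^2_j\mapsto w^2_{1-j}$; the identity is impossible because the shifted edge $\{v_{p^n-1},w^2_0\}$ would be carried to the non-edge $\{v_1,w^2_0\}$ (here $m\ge 3$), and the reflection is impossible because the twisted $O_2$--$O_3$ edge forces $\sigma|_{O_3}\colon w^3_j\mapsto w^3_{2-j}$ while the $O_1$--$O_3$ family forces $\sigma|_{O_3}\colon w^3_j\mapsto w^3_{-j}$, which are incompatible for $m\in\{3,4,5\}$. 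Hence $Aut(G)$ induces $C_{p^n}$ on $O_1$; and an element fixing $v_0$ then fixes $O_1$ pointwise, hence, via the $O_1$--$O_2$ edges, fixes $O_2$ pointwise, hence fixes every $O_l$ pointwise along the chain, so it is the identity. Therefore $Aut(G)=A$ and $A\in GR(2)$.

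The step I expect to be hardest is the elimination of the dihedral action on $O_1$, and within it the sub-case $s=2$: with only two small orbits one has $\row{C_m}{2}\notin GR(2)$, so one cannot first rigidify the small part by Lemma~\ref{p2} and then attach $O_1$; the rigidity of the pair $\{O_2,O_3\}$ has to be produced jointly with $O_1$, which is exactly the role of the twisted $O_2$--$O_3$ edge together with the $O_1$--$O_3$ family. (For $s\ge 3$ one could instead build on the graph of Lemma~\ref{p2} on the small orbits, but the construction above handles all $s\ge 2$ uniformly.) A secondary, purely computational annoyance is verifying in the smallest instances that no small-orbit vertex accidentally acquires degree $5$, which is the reason the internal cycles are attached to the small orbits.
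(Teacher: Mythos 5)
Your proof is correct and follows essentially the same route as the paper's: reduce to the case with no trivial orbits, construct an explicit $2$-colored graph in which the large orbit carries a cycle, the small orbits are chained by straight parallel matchings with an extra shifted $O_1$--$O_2$ family, separate the orbits by $1$-degree counts, rule out the reflection, and propagate pointwise rigidity along the chain (the paper uses the complement of the cycle on $O_1$ and slightly different auxiliary families, but the method is identical). One slip worth noting: the identity $A=C_{p^n}\oplus\row{C_m}{s}\oplus I_q$ is false as written, since that direct sum has order $p^n m$ while $A$ is only the cyclic subgroup of it generated by an element acting as a generator in each summand --- but nothing downstream depends on this, because you correctly use that $A$ is regular on $O_1$ and verify $Aut(G)=A$ for the cyclic group itself.
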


\begin{proof}
The proof is similar to the proof of Lemma~\ref{p2}.
Let $n_p = p$ for $p \in \{3,5\}$ and $n_2=4$. 
By Theorem~\ref{suma2}, we may assume that there is no nontrivial orbit. 
Let $r \geq 3$ be the number of nontrivial orbits. 
We construct a graph $G=(V,E)$.
Let $V = \bigcup_{i=1}^r O_i$, where the sets $O_1 = \{v_0^1, \ldots v_{p^n}^1-1\}$, and $O_i = \{ v_0^i, \ldots, v_{n_p-1}^i\}$, $i \in \{2, \ldots r\}$ are the orbits of the group $A$.
$$E(\{v,w\})=\left\{
\begin{array}{cl}
1 & \textrm{for } \{v,w\} = \{v_i^1, v_{i+j}^1\}, \textrm{ where } j \in \{2, \ldots p^n-2\},\\ 
1 & \textrm{for } \{v,w\} = \{v_i^j, v_i^{l}\}, \\
&  \textrm{where either } l = j+1 \textrm{ or } j=1, l=3,\\ 
1 &  \textrm{for } \{v,w\} = v_i^1, v_{i+1}^2, \\
0 & \textrm{otherwise}.
\end{array}\right.$$ 
(The addition in the  indices is modulo $r^t$ in the first case, and modulo $n_p$ in the remaining cases).

It is clear that $Aut(G) \supseteq A$. 
We have to prove the opposite inclusion. 
Since $A$ acts regularly on $O_1$, it is enough to show that $A$ preserves the orbit $O_1$ and the stabilizer of $v_0^1$ is trivial. 

Every member of $O_1$ has exactly $p^n-1$ $1$-neighbors. 
Every member of $O_2$ has exactly $2\frac{p^n}{n_p}+1$ $1$-neighbors. 
Every member of $O_3$ has $\frac{p^n}{n_p}+1$ $1$-neighbors, if $t=3$ and $\frac{p^n}{n_p}+2$ $1$-neighbors, otherwise. 
The remaining vertices have at most two $1$-neighbors. 
Since $n_p \geq 3$, we have $p^n-1 > 2\frac{p^n}{n_p}+1>\frac{p^n}{n_p}+2>2$. 
Thus, the orbits $O_1$, $O_2$, and $O_3$ are preserved by $Aut(G)$.

Let $\s \in Aut(G)$ fixes $v_0^1$. 
The vertex $v_0^3$ is the only $1$-neighbor of $v_0^1$ that belongs to $O_3$. 
Hence, $\s$ fixes $v_0^3$. 
The vertex  $v_0^2$ is the only $1$-neighbor of $v_0^3$ that belongs to $O_2$.
Hence, $\s$ fixes $v_0^2$.
The vertices $v_1^1$ and $v_{p^n-1}^1$ are the only two $0$-neighbors of $v_0^1$ that belong to $O_1$. 
Moreover, $v_{p^n-1}^1$ is $1$-neighbor of $v_0^2$ and $v_1^1$ is not.
Hence, $\s$ fixes $v_1^1$ and $v_{p^n-1}^1$. 
The same induction argument as in the proof of Lemma~\ref{potega3} shows that $\s$ fixes every member of $O_1$.

The vertex $v_i^3$ is the only $1$-neighbor of $v_i^1$ that belongs to $O_3$. 
Thus, $\s$ fixes all the members of $O_3$. 
Similarly, $v_i^2$ is the only $1$-neighbor of $v_i^3$ that belongs to $O_2$. 
Hence, $\s$ fixes all the members of $O_2$.

We prove by induction that $\s$ fixes all the vertices. 
Assume that $\s$ fixes all the elements of the set $O_1 \cup \cdots \cup O_i$ for $i \geq 3$. 
The vertex $v_j^{i+1}$ is the only $1$-neighbor of $v_j^i$ that does not belongs to the set $O_1 \cup \cdots \cup O_i$. 
Thus, $\s$ fixes every member of $O_{i+1}$. This finishes the induction step. 
Consequently, $\s$ fixes all the vertices, and therefore, $\s$ is trivial.
Hence $Aut(G) \subseteq A$. This completes the proof of the lemma. 
\end{proof}

Lemma~\ref{conajmniej-trzy} decides the last case for the primes greater than two. 
In this situation, we have the following result. 

\begin{Theorem} \label{nie2}
Let $A$ be a cyclic permutation group of order $p^n$. Let $k_i$, $i \in \{1, \ldots, n\}$ denote the number of orbits of $A$ of cardinality $p^i$.
If $p \ne 2$, then 
\begin{enumerate}
    \item if $\sum_{i=1}^n k_i= 1$, then $A \notin GR$, 
    \item if $\sum_{i=1}^n k_i= 2$, then 
          \begin{itemize}
    \item $A \in GR^*(3)$, for $k_1 \in \{ 1, 2 \}$ and $p \in \{ 3, 5 \}$, 
\item $A \in GR(2)$, otherwise,
\end{itemize}
\item if $\sum_{i=1}^n k_i> 2$, then $A \in GR(2)$.
\end{enumerate}
\end{Theorem}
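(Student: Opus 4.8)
The plan is to derive the theorem by assembling the results already established in this section, organizing the case analysis according to how many orbits of $A$ have cardinality at least $7$. Throughout recall that $n \geq 2$, so a generator of $A$ has order $p^n$ and $A$ therefore has an orbit of size $p^n \geq p^2 \geq 9$; in particular $A$ always has at least one orbit of cardinality $\geq 7$.

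First I would deal with the trivial direction and with the primes $p > 5$. If $\sum_{i=1}^n k_i = 1$, the unique nontrivial orbit has size $p^n$, so $A = C_{p^n} \oplus I_q$ with $p^n \geq 9$; since $C_m \notin GR$ for $m \geq 3$, Theorem~\ref{suma1} gives $A \notin GR$, which is part~(1) for every prime $p \neq 2$. If $p > 5$ and $\sum_{i=1}^n k_i \geq 2$, then Theorem~\ref{poteg} gives $A \in GR(2)$ directly; and when $p > 5$ the clause ``$k_1 \in \{1,2\}$ and $p \in \{3,5\}$'' of part~(2) never applies, so parts~(2) and~(3) hold in this range.

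It remains to treat $p \in \{3,5\}$ with at least two nontrivial orbits, in which case every nontrivial orbit has size $p$ or size $p^i$ with $i \geq 2$ (hence $\geq 9$). Let $\ell = \sum_{i=2}^n k_i$ be the number of orbits of cardinality at least $7$; as noted, $\ell \geq 1$. If $\ell \geq 2$, then $A$ has at least two orbits of cardinality $\geq 7$ (and possibly other, smaller, nontrivial orbits, and trivial orbits), so the construction in the proof of Theorem~\ref{poteg}, in the strengthened form pointed out after that proof, yields a $2$-colored graph with automorphism group $A$; hence $A \in GR(2)$. This settles the subcase $k_1 = 0$ of part~(2) and the subcase $\ell \geq 2$ of part~(3). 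If $\ell = 1$, the unique large orbit must be the maximal one, of size $p^n$, and all remaining nontrivial orbits have size $p$, so $A = C_{p^n} \oplus \row{C_p}{k_1} \oplus I_q$ with $k_1 \geq 1$. When $k_1 = 1$, $A$ has exactly two nontrivial orbits and Lemma~\ref{potega3} gives $A \in GR^*(3)$; this is precisely the displayed subcase of part~(2), and one checks that $k_1 = 2$ cannot occur when $\sum_{i=1}^n k_i = 2$ (because $n \geq 2$ forces the maximal orbit to be a third orbit as soon as $k_1 \geq 2$), so the value $k_1 = 2$ in the statement is vacuous. When $k_1 \geq 2$, $A$ has at least three nontrivial orbits, so Lemma~\ref{conajmniej-trzy} gives $A \in GR(2)$, finishing part~(3). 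Matching these subcases against $\sum_{i=1}^n k_i$ equal to $1$, equal to $2$, and greater than $2$ yields the three alternatives of the theorem, and the classification is exhaustive.

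The step I expect to require the most care is the appeal to the strengthened Theorem~\ref{poteg} when small orbits of size $p \in \{3,5\}$, or trivial orbits, accompany the two large ones: one must check that after coloring the within-orbit edges with color~$1$ the set $O_1$ is still singled out by a fixed $1$-degree, that the remaining orbits are still recovered by the inductive ``new $1$-neighbors'' argument, and that the stabilizer computation (which used only $|O_2| > 5$) goes through verbatim. Everything else is bookkeeping, reducing to Lemmas~\ref{potega3} and~\ref{conajmniej-trzy}, Theorems~\ref{suma1}, \ref{suma2} and~\ref{poteg}, and the standard fact that $C_m \notin GR$ for $m \geq 3$.
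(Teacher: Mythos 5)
Your proposal is correct and follows essentially the same route as the paper, which states Theorem~\ref{nie2} as a summary assembled from the observation that $C_m\oplus I_q\notin GR$ for $m\geq 3$, Theorem~\ref{poteg} together with its strengthening to two orbits of cardinality at least $7$, Lemma~\ref{potega3}, Lemma~\ref{conajmniej-trzy}, and Theorems~\ref{suma1} and~\ref{suma2}. Your bookkeeping (including the observation that $k_1=2$ is vacuous under the standing assumption $n>1$) matches the intended case split.
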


A little different situation is when $p=2$. We still do not consider here the situation, when there are orbits of cardinality two. 
The case that we still do not consider for $p=2$ is when we have at least one orbit of cardinality two and at least one orbit of cardinality greater than two and at most one orbit of cardinality greater than four. 
First, we consider the following.  

\begin{Lemma}
If $A$ has at least one orbit of cardinality two and exactly one orbit of greater cardinality, then $A \notin GR$. 
\end{Lemma}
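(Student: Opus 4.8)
The plan is to show directly that whenever $G=(V,E)$ is a $k$-colored graph with $A\subseteq Aut(G)$, the group $Aut(G)$ is strictly larger than $A$; since $k$ is arbitrary, this yields $A\notin GR$. Under the standing hypotheses ($p=2$, $n\ge 2$) the largest orbit of $A$ has size $2^n$, and by assumption it is the only orbit of size greater than two, so the set $V$ on which $A$ acts decomposes as $V=O_1\cup O_2\cup\cdots\cup O_{s+1}\cup F$, where $O_1=\{v_0,\dots,v_{2^n-1}\}$ is the orbit of size $2^n$ (on which $A$ acts regularly), $O_2,\dots,O_{s+1}$ (with $s\ge 1$) are the two-element orbits, and $F$ is the possibly empty set of fixed points; a generator $a$ of $A$ sends $v_i\mapsto v_{i+1}$ on $O_1$ (indices mod $2^n$) and transposes the two points of each $O_j$, $2\le j\le s+1$.

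The permutation I will use is the reflection of the long cycle extended by the identity: define $\rho\colon V\to V$ by $\rho(v_i)=v_{-i}$ for $v_i\in O_1$, and $\rho(x)=x$ for $x\notin O_1$. The heart of the argument is the claim that $\rho(e)$ lies in the same $A$-orbit of two-element subsets of $V$ as $e$, for every edge $e$; since $A\subseteq Aut(G)$, the coloring $E$ is constant on $A$-orbits of edges, so this at once gives $E(\rho(e))=E(e)$ for all $e$, i.e. $\rho\in Aut(G)$. The claim is checked case by case. If $e=\{v_i,v_j\}\subseteq O_1$, its $A$-orbit is $\{\,\{v_a,v_b\}: b-a\equiv\pm(j-i)\pmod{2^n}\,\}$, and $\rho(e)=\{v_{-i},v_{-j}\}$ has index difference $-(j-i)$, so it stays in this orbit. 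If $e=\{v_i,x\}$ with $x\notin O_1$, then for fixed $x$ the $A$-orbit of $e$ depends only on the parity of $i$ (and when $x\in F$ there is a single orbit of all such edges); since $2\mid 2^n$ we have $-i\equiv i\pmod 2$, so $\rho(e)=\{v_{-i},x\}$ lies in the same orbit. If $e$ has both ends outside $O_1$, then $\rho$ fixes both ends, so $\rho(e)=e$.

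It remains to check $\rho\notin A$, which is where the exhibited automorphism beats $A$: the permutation $\rho$ fixes $v_0$, the action of $A$ on $O_1$ is regular so the only element of $A$ fixing $v_0$ is the identity, and $\rho$ is not the identity because $\rho(v_1)=v_{2^n-1}\ne v_1$ (this is the one place where $n\ge 2$ is used). Hence $\rho\in Aut(G)\setminus A$, so $Aut(G)\ne A$; as $G$ and $k$ were arbitrary, $A\notin GR$. I do not anticipate a genuine obstacle; the only step deserving a moment's attention is the mixed-edge case, where one must observe that reflecting $O_1$ preserves the parity of the index. I will also remark that one cannot peel a single two-element orbit off $A$ via Lemma~\ref{trywialne}, because $A$ acts on each $O_j$ through the parity of the exponent of $a$ and so is not a direct sum in which an individual $O_j$ splits; the fixed points $F$, however, do split off, so the contribution of $F$ could equivalently be deferred to Lemma~\ref{trywialne} (the permutation $\rho$ preserves all orbits of $A$, as that lemma requires).
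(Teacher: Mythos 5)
Your proof is correct and takes essentially the same route as the paper's: both exhibit an involution outside $A$ (a reflection of the large orbit $O_1$) that maps every $A$-orbit of edges to itself, and hence belongs to $Aut(G)$ for any colored graph $G$ with $A\subseteq Aut(G)$. The only difference is the choice of witness --- the paper's permutation also swaps the two points of each two-element orbit while yours fixes everything outside $O_1$ --- and both are justified by the same parity observation on the mixed edges.
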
 

\begin{proof}
By Lemma~\ref{trywialne}, we may assume that there is no nontrivial orbit. 
Let $O_i = \{v_0^i, \ldots, v_{n_i}^i\}$, $i \in \{1, \ldots, k\}$ be the orbits of $A$. 
Assume that $n_1 > 2$, then $n_i = 2$ for $i >1$. 
We consider the action of $A$ on the set of the edges. 
As usually, it is no matter how we color the orbits that consists of the edges that are contained in $O_1$, such a coloring will be preserved by the permutation 
$$\s = \Pi (v_{j}^i,v_{(n_i - j-1)}^i), \textrm{ where } j \in \{0, \ldots \frac{n_i-1}{2}\}, i \in \{1, \ldots, k\}. $$
It is also clear that any of the remaining orbits is preserved by $\s$. 
Obviously, $\s \notin A$. Hence, $A \notin GR$. 
\end{proof}

The remaining case is when we have at least one orbit of cardinality two and at least two orbits of greater cardinality with at most one of them of cardinality greater than four.

\begin{Lemma}
With the assumption above, $A \in GR(2)$. 
\end{Lemma}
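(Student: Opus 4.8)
The plan is to produce a single two-colored (i.e.\ ordinary) graph $G$ with $Aut(G)=A$, in the same spirit as the proofs of Lemmas~\ref{p2} and \ref{conajmniej-trzy}, but equipping the construction with a new device that rules out the unwanted reflections. First I would reduce, via Theorem~\ref{suma2} exactly as in the proof of Lemma~\ref{conajmniej-trzy}, to the case in which $A$ has no trivial orbit (this is legitimate because after stripping the fixed points the core group still has an orbit of cardinality $2^n$, hence is not $I_1$, so its direct sum with $I_m$ stays in $GR(2)$). Since $A$ has order $2^n$ and acts faithfully, it has a unique orbit $O_1=\{v^1_0,\dots,v^1_{2^n-1}\}$ of the maximal cardinality $2^n$, and all other orbit cardinalities are powers of $2$, so ``greater than $2$ but not greater than $4$'' means ``equal to $4$''. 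Unravelling the hypothesis we are in one of two situations: either $n\ge 3$, $O_1$ is the only orbit of cardinality exceeding $4$, and there is at least one orbit of cardinality $4$ and at least one of cardinality $2$; or $n=2$ and there are at least two orbits of cardinality $4$ (one being $O_1$) and at least one of cardinality $2$. In both cases fix a distinguished orbit $O_2=\{v^2_0,v^2_1,v^2_2,v^2_3\}$ of cardinality $4$ with $O_2\neq O_1$, and a distinguished orbit $U=\{u_0,u_1\}$ of cardinality $2$; as always the generator $a$ of $A$ advances the index by $1$ on each orbit.

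For the construction I would take the subgraph spanned by $O_1$ to be the $2^n$-cycle $v^1_0v^1_1\cdots v^1_{2^n-1}v^1_0$, join every $v^1_i$ to both $v^2_{i\bmod 4}$ and $v^2_{(i+1)\bmod 4}$ (the \emph{offset} join), join every $v^1_i$ to $u_{i\bmod 2}$, and join every $v^2_j$ to $u_{j\bmod 2}$ (the two \emph{diagonal} joins). Every remaining orbit is attached to $O_1$, or chained to the previously-treated orbit, in the parallel fashion of Lemma~\ref{conajmniej-trzy}, and the within-orbit edges of $O_2$ (and of the other orbits) are inserted or omitted so that the $1$-degree of the vertices of $O_1$ is strictly larger than that of every other vertex and so that each orbit other than $O_1$ is, once the earlier orbits have been recognised, exactly the set of vertices occurring as the unique new $1$-neighbour of an already-recognised vertex. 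Only the sparse low-order cases ($n\in\{2,3\}$ with very few orbits) require some within-$O_2$ edges to be present, and checking them is a short routine variant of the degree computations in Lemmas~\ref{p2} and \ref{conajmniej-trzy}. All the edge sets so defined are $A$-invariant, so $Aut(G)\supseteq A$; the degree condition forces $Aut(G)$ to preserve $O_1$, hence inductively every orbit, and since $O_1$ spans a $2^n$-cycle the group induced on $O_1$ by $Aut(G)$ lies in $D_{2^n}$.

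It remains to show that an automorphism $\sigma$ fixing $v^1_0$ is the identity, and here lies the one genuinely new point. Unlike the case $p>5$ of Theorem~\ref{poteg}, where a single orbit of cardinality $>5$ joined via the asymmetric triple $\{0,1,3\}$ sufficed, no attachment of $O_1$ to a \emph{single} orbit of cardinality $2$ or $4$ can destroy the reflection symmetry of the cycle $O_1$: every $A$-invariant such attachment is symmetric under some reflection of $\mathbb{Z}/4$ (respectively $\mathbb{Z}/2$). The three orbits $O_1,O_2,U$ must be used jointly. Suppose $\sigma$ induced the reflection $v^1_i\mapsto v^1_{-i}$ on $O_1$. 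A short computation with the offset join shows that $\sigma$ would then be forced to act on $O_2$ as $v^2_j\mapsto v^2_{1-j}$, which interchanges the two parity classes $\{v^2_0,v^2_2\}$ and $\{v^2_1,v^2_3\}$. On the other hand $v^1_0$ is joined to $u_0$ and not to $u_1$, so $\sigma(u_0)=u_0$; while $v^2_0$ is joined to $u_0$ and $v^2_1=\sigma(v^2_0)$ is joined to $u_1$, so $\sigma(u_0)=u_1$ — a contradiction. Hence $\sigma$ fixes $O_1$ pointwise, and the by-now-standard unique-new-neighbour induction (as in the proofs of Lemmas~\ref{potega3} and \ref{conajmniej-trzy}) shows that $\sigma$ fixes every vertex. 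Therefore $Aut(G)=A$, so $A\in GR(2)$.

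I expect the main obstacle to be precisely the design of this three-orbit reflection-breaking gadget: the naive one- and two-orbit attachments all fail, and one has to notice that combining an offset join to a cardinality-$4$ orbit (which forces a parity-reversing reflection there) with two diagonal joins to a cardinality-$2$ orbit (which force parity to be preserved) produces the needed contradiction. Everything else is the familiar chaining-and-degrees argument already carried out twice in this section, with only a little bookkeeping in the smallest cases.
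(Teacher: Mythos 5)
Your proposal is correct and follows essentially the same route as the paper: reduce via Theorem~\ref{suma2}, span the maximal orbit $O_1$ by a cycle, chain the remaining orbits in parallel, and break the dihedral symmetry by joining $O_1$ with an offset to a four-element orbit $O_2$ and joining both $O_1$ and $O_2$ to a two-element orbit --- this three-orbit reflection-breaking gadget is exactly the one the paper uses. The only caveat is your side claim that the vertices of $O_1$ can be given the strictly largest $1$-degree while $O_1$ spans only a cycle: for large $n$ the vertices of $O_2$ and of the two-element orbit acquire degree about $2^{n-1}$ from the joins, so one must instead identify $O_1$ by its exact (small) degree, as the paper does, or color the complement of the cycle inside $O_1$ as in Lemma~\ref{conajmniej-trzy}; this is routine bookkeeping and does not affect the argument.
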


\begin{proof}
By Theorem~\ref{suma2}, we may assume that there is no nontrivial orbit. 
Assume that $A$ has $r\geq 1$ orbits of cardinality two and $t \geq 2$ orbits of cardinality greater than two. 
Let $O_1 = \{v_0^1, \ldots, v_{p^n-1}^1\}$, $O_j = \{v_0^j,v_1^j,v_2^j,v_3^j\}$, $j \in \{2, \ldots, t\}$, and $P_i=\{w_0^i,w_1^i\}$, $i \in \{1, \ldots, r\}$ be the orbits of $A$.  
We define a $2$-edge-colored graph $G=(V,E)$ such that $Aut(G) = A$. 
In Figure~\ref{c442} the simplest graph in the case with three orbits and $|O_1|=4$ is presented. 
In general case, we generalize and modify this construction to obtain a common proof for all cases.

\begin{figure}
    \psset{linewidth=0.03, xunit=1.7cm, yunit=1.5cm}
\begin{pspicture}(4,4)(0,0)

\cnode(0,0){0.1}{b1}
\cnode(4,0){0.1}{b2}
\cnode(4,4){0.1}{b3}
\cnode(0,4){0.1}{b4}
\cnode(2,0){0.1}{c1}
\cnode(4,2){0.1}{c2}
\cnode(2,4){0.1}{c3}
\cnode(0,2){0.1}{c4}

\cnode(1,2){0.1}{d1}
\cnode(2,3){0.1}{d2}

 \psset{linewidth=0.05}

\ncline[linecolor=black]{-}{c1}{c2}
\ncline[linecolor=black]{-}{c2}{c3}
\ncline[linecolor=black]{-}{c3}{c4}
\ncline[linecolor=black]{-}{c4}{c1}

\ncline[linecolor=black]{-}{b1}{c1}
\ncline[linecolor=black]{-}{b2}{c2}
\ncline[linecolor=black]{-}{b3}{c3}
\ncline[linecolor=black]{-}{b4}{c4}

\ncline[linecolor=black]{-}{c1}{b2}
\ncline[linecolor=black]{-}{c2}{b3}
\ncline[linecolor=black]{-}{c3}{b4}

\ncline[linecolor=black]{-}{c4}{b1}
\ncline[linecolor=black]{-}{d1}{b1}
\ncline[linecolor=black]{-}{d1}{c1}

\ncline[linecolor=black]{-}{d1}{b3}
\ncline[linecolor=black]{-}{d1}{c3}

\ncline[linecolor=black]{-}{d2}{b2}
\ncline[linecolor=black]{-}{c2}{d2}

\ncline[linecolor=black]{-}{d2}{c4}
\ncline[linecolor=black]{-}{d2}{b4}

\end{pspicture}
\caption{A $2$-edge-colored graph $G$ such that $Aut(G)$ is cyclic permutation group with three orbits of cardinality four, four, and two.}\label{c442}
\end{figure}
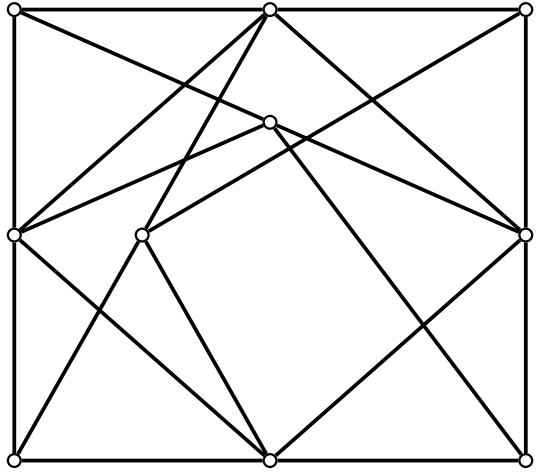

Let $V = \bigcup O_i$. 
Then, 
$$E(\{v,w\}) = 
\left\{
\begin{array}{cl}
1 & \textrm{for } \{v,w\} = \{v_j^1, v_{j+1}^1\},  \\
1 & \textrm{for } \{v,w\} \subset O_2, \\
1 & \textrm{for } \{v,w\} = \{v_j^i,v_{j}^{i+1}\}, i<t, \\
1 & \textrm{for } \{v,w\} = \{v_j^1,v_{j+1}^2\},\\
1 & \textrm{for } \{v,w\} = \{v_j^i,w_{j}^1\}, i \in \{1,2\},\\
1 & \textrm{for } \{v,w\} = \{v_j^t,w_{j}^2\}, \textrm{ if } r \geq 2,\\
1 & \textrm{for } \{v,w\} = \{w_j^i,w_j^{i+1}\}, 1<i<t,\\
0 & \textrm{otherwise.}
\end{array}
\right.$$
(The addition in the  indices is modulo $p^n$ in the first case, and modulo $4$ or $2$, respectively, in the remaining cases).

It is clear that $A \subseteq Aut(G)$. 
We have to prove the opposite inclusion. 
Since $A$ acts regularly on $O_1$, it is enough to show that $Aut(G)$ preserves $O_1$ and the stabilizer of $v_0^1$ is trivial.

The $1$-degree of a member of $O_1$ is exactly five. 
The $1$-degree of a member of $O_2$ is at least $\frac{p^n}{2}+4>5$.  
The $1$-degree of a member of $P_1$ is  $\frac{p^n}{2}+2 \ne 5$. 
The rest of the vertices has $1$-degree at most two. 
This shows that $Aut(G)$ preserves the orbits $O_1$, $O_2$, and $P_1$.

Let $\s \in Aut(G)$ fixes $v_0^1$. 
The vertex $w_0^1$ is the only $1$-neighbor of $v_0^1$ that belongs to $P_1$. 
Thus, $\s$ fixes $w_0^1$ and $w_1^1$.  
The vertex $v_0^2$ is the only common $1$-neighbor of $v_0^1$ and $w_0^1$. 
Hence, $\s$ fixes $v_0^2$. 
Similarly, the vertex $v_1^2$ is the only common $1$-neighbor of $v_0^1$ and $w_1^1$. 
Hence, $\s$ fixes $v_1^2$.
The vertex $w_0^1$ has exactly two $1$-neighbors that belong to $O_2$: $v_0^2$ and $v_2^2$. 
Therefore, $\s$ fixes $v_2^2$, and consequently, $\s$ fixes $v_3^2$. 
Continuing, $v_1^1$ is the only common $1$-neighbor of $v_1^2$ and $v_2^4$ among the members of $O_1$. 
Thus, $\s$ fixes $v_1^1$. 
The same induction argument as many times before shows that $\s$ fixes every member of $O_1$. 

For $i \geq 2$, the vertex $v_j^{i+1}$ is the only $1$-neighbor of the vertex $v_j^i$ that is outside the set $P_1 \cup \bigcup_{l=1}^i O_l$. 
Therefore, inductively, $\s$ fixes every vertex in all orbits $O_i$. 
If $r \geq 2$, then the vertex $w_j^2$ is the only $1$-neighbor of the vertex $v_j^t$ that is outside the set $P_1 \cup \bigcup O_i$. 
Thus, $\s$ fixes $w_0^2$ and $w_1^2$. 
Finally, for $i \geq 2$ the vertex $w_j^{i+1}$ is the only $1$-neighbor of the vertex $w_j^i$ that is outside the set $P_{i-1} \cup O_t$. 
Therefore, inductively, $\s$ fixes every vertex in all orbits $P_i$. 
This implies that $\s$ is trivial. 
Therefore, $Aut(G) \subseteq A$. This completes the proof of the lemma.
\end{proof}

Summarizing all the cases for $p=2$, we have the following.

\begin{Theorem} \label{2}
Let $A$ be a cyclic permutation group of order $2^n$. Let $k_i$, $i \in \{1, \ldots, n\}$ denote the number of orbits of $A$ of cardinality $2^i$. 
Then, 
\begin{enumerate}
    \item if $\sum_{i=2}^n k_i= 1$, then $A \notin GR$, 
     \item if $\sum_{i=2}^n k_i= 2$, $k_1=0$ and $k_2 \in \{1,2\}$, then $A \in GR^*(3)$, 
     \item $A \in GR(2)$, otherwise.
\end{enumerate}
\end{Theorem}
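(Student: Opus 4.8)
The plan is to prove the theorem by a case analysis on the orbit structure of $A$, invoking in each case one of the results already established in this section; trivial orbits are absorbed throughout by the corollaries of the relevant lemmas together with Theorem~\ref{suma1}, Theorem~\ref{suma2}, and Lemma~\ref{trywialne}, exactly as done above. Write $s=\sum_{i=2}^{n}k_i$ for the number of orbits of cardinality at least $4$ and $t=\sum_{i=3}^{n}k_i$ for the number of orbits of cardinality at least $8$; since $A$ has an orbit of cardinality $2^n$ with $n\ge 2$ we have $s\ge 1$ and $0\le t\le s$.

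The case $s=1$ gives conclusion~(1). If moreover $k_1=0$, then $A=C_{2^n}\oplus I_q$; since $2^n\ge 4$ and $C_m\notin GR$ for $m\ge 3$ (as $C_m\subseteq Aut(G)$ forces $D_m\subseteq Aut(G)$), we get $A\notin GR$ --- directly when $q=0$, and by Theorem~\ref{suma1} when $q\ge 1$. If $k_1\ge 1$, then $A$ has a cardinality-two orbit and exactly one larger orbit, so $A\notin GR$ by the first of the two lemmas immediately preceding this theorem.

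Next suppose $s\ge 2$ and $t\le 1$. If $k_1\ge 1$, then $A$ has a cardinality-two orbit, at least two larger orbits, and at most one orbit of cardinality greater than four, so $A\in GR(2)$ by the second of the two lemmas immediately preceding this theorem, giving conclusion~(3). If $k_1=0$, then every nontrivial orbit has cardinality $4$ except the unique orbit of cardinality $2^n$, which is larger exactly when $t=1$; hence either $A=\row{C_4}{s}\oplus I_q$ (when $t=0$) or $A$ has exactly one orbit of cardinality greater than five and no cardinality-two orbit (when $t=1$). In either case $s=2$ gives $A\in GR^*(3)$ --- by Lemma~\ref{p3} and Lemma~\ref{nie3} when $t=0$, i.e.\ $k_2=2$, and by Lemma~\ref{potega3} when $t=1$, i.e.\ $k_2=1$ --- while $s\ge 3$ gives $A\in GR(2)$, by Lemma~\ref{p2} when $t=0$ and by Lemma~\ref{conajmniej-trzy} when $t=1$. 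Reading off the subcases, the $GR^*(3)$ outcome occurs precisely when $s=2$, $k_1=0$, $k_2\in\{1,2\}$, which is conclusion~(2); conclusion~(3) holds in all remaining subcases.

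There remains the case $s\ge 2$ and $t\ge 2$, which always falls under conclusion~(3) --- if $s=2$ then necessarily $k_2=0$, excluding case~(2) --- so it must be shown that $A\in GR(2)$. When $A$ has no orbit of cardinality $2$ or $4$, the graph built in the proof of Theorem~\ref{poteg} works unchanged, since $A$ then has two orbits of cardinality $\ge 8>5$ and both uses of ``$p>5$'' made there remain available. The main obstacle is the general subcase, in which orbits of cardinality $2$ or $4$ also occur: following the remark after the proof of Theorem~\ref{poteg}, I would take the graph to be a chain headed by the large orbits, colour all edges inside every orbit but the first with colour~$1$ so that the small orbits' vertices do not accidentally acquire $1$-degree~$5$, and, wherever a plain chain link would still produce a vertex of $1$-degree~$5$ outside the cycle $O_1$, attach the offending small orbit by the ``across-orbit'' gadget from the proof of Lemma~\ref{conajmniej-trzy} instead. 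One then recovers $O_1$ --- even if some small vertices share its $1$-degree --- as the orbit of $1$-degree~$5$ whose vertices have exactly two $1$-neighbours of $1$-degree~$5$, and reruns the rigidity argument for the stabiliser of $v_0^1$ on the resulting graph. Carrying out this mixed-orbit construction and its degree bookkeeping, and checking that the cases above are exhaustive and mutually exclusive, is the only substantial work; the rest is a direct appeal to the lemmas of this section.
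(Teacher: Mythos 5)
Your proposal is correct and takes essentially the same route as the paper, which offers no separate proof of this theorem beyond the phrase ``summarizing all the cases for $p=2$'': your case split on $s=\sum_{i=2}^n k_i$ and $t=\sum_{i=3}^n k_i$ reassembles exactly the preceding lemmas (the two unnumbered lemmas, Lemmas~\ref{p3}, \ref{nie3}, \ref{p2}, \ref{potega3}, \ref{conajmniej-trzy}, and the remark following Theorem~\ref{poteg}) in the intended way. If anything you are more careful than the paper at the one delicate point, the subcase $t\ge 2$ with orbits of cardinality $2$ or $4$ also present, where the paper relies only on the brief remark about recoloring the edges inside each $O_i$ while you spell out a concrete modification and the degree bookkeeping it requires.
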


Now, for the cyclic automorphism groups of colored graphs of prime power order we have the following simple characterization. 

\begin{Corollary}
A cyclic permutation group $A$ of prime power order belongs to $GR$ if and only if $A$ has at least two orbits of cardinality greater than two or $A$ has order two.
\end{Corollary}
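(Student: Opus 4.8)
The plan is to read off the Corollary from the three summarizing results already proved, namely Theorem~\ref{pier} for groups of prime order, Theorem~\ref{nie2} for groups of order $p^n$ with $p\ne 2$ and $n\ge 2$, and Theorem~\ref{2} for groups of order $2^n$ with $n\ge 2$. Since $A$ is a cyclic permutation group of order $p^n$ with $n\ge 1$, these three cases are exhaustive, and in each one I would simply translate the numerical hypotheses of the cited theorem into the two clauses ``$A$ has at least two orbits of cardinality greater than two'' and ``$A$ has order two''.

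Consider first the case of prime order, where $A=\row{C_p}{r}\oplus I_q$ with $r\ge 1$, $q\ge 0$. If $p=2$, then $A$ has order two (the parallel product does not change the order), the second clause holds, and indeed $A\in GR(2)\subseteq GR$ by part (3) of Theorem~\ref{pier}. If $p\ne 2$, then every nontrivial orbit has cardinality $p\ge 3>2$ and the order of $A$ is not two, so the right-hand side of the equivalence reduces to $r\ge 2$; and parts (1)--(3) of Theorem~\ref{pier} say precisely that $A\in GR$ if and only if $r\ge 2$.

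Next, let $A$ have order $p^n$ with $n\ge 2$ and $p\ne 2$. Here every nontrivial orbit has cardinality $p^i\ge p\ge 3>2$, and there is always an orbit of cardinality $p^n$; so, in the notation of Theorem~\ref{nie2}, the number of orbits of cardinality greater than two equals $\sum_{i=1}^n k_i\ge 1$, and the order of $A$ is never two. Comparing with the three items of Theorem~\ref{nie2}: item (1) ($\sum_i k_i=1$) gives $A\notin GR$, while items (2) and (3) together exhaust the case $\sum_i k_i\ge 2$ and place $A$ in $GR^*(3)\cup GR(2)\subseteq GR$. Hence $A\in GR$ if and only if $\sum_i k_i\ge 2$, i.e.\ if and only if $A$ has at least two orbits of cardinality greater than two. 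Finally, for $A$ of order $2^n$ with $n\ge 2$, the orbits of cardinality greater than two are exactly those of cardinality $2^i$ with $i\ge 2$, so their number is $\sum_{i=2}^n k_i\ge 1$ (there is always an orbit of cardinality $2^n$), and again $A$ is never of order two; reading off Theorem~\ref{2} in the same way — item (1) for $\sum_{i\ge 2}k_i=1$, items (2) and (3) for $\sum_{i\ge 2}k_i\ge 2$ — gives $A\in GR$ if and only if $\sum_{i\ge 2}k_i\ge 2$, as required.

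The only point that needs care — and where a hasty reader might slip — is the bookkeeping around the prime $2$: one must remember that orbits of cardinality exactly two do \emph{not} contribute to the first clause of the Corollary, that such orbits may coexist with trivial orbits and with the (unique) large orbit, and that for $n=1$ the group collapses to order two no matter how many parallel copies of $C_2$ are taken. Once these conventions are kept straight, the Corollary follows at once from Theorems~\ref{pier}, \ref{nie2} and~\ref{2}.
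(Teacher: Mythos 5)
Your proposal is correct and matches the paper's intent exactly: the paper states this Corollary without proof as an immediate summary of Theorems~\ref{pier}, \ref{nie2} and~\ref{2}, and your case analysis is precisely the translation the author leaves to the reader. Your careful handling of the $p=2$ bookkeeping (orbits of cardinality exactly two not counting, and the collapse to order two when $n=1$) is the only nontrivial point, and you get it right.
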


\footnotesize


\begin{thebibliography}{99}

\bibitem{ba} L. Babai, 
{\it Automorphism groups, isomorphism, reconstruction, 
in Handbook of Combinatorics}, 
Elsevier Science B. V. 1995, pp. 1447-1540. 



\bibitem{ba1} L. Babai, 
{\it Finite digraphs with given regular automorphism groups}, 
Period. Math. Hungar. {\bf 11} n. 4 (1980), 257-270.



\bibitem{chao} C. Y. Chao, 
{On a theorem of Sabidussi}, 
Proc. Amer. Math. Soc., {\bf 15} (1964), 291-292.



\bibitem{kag} J. N. Kagano, 
{\it Linear graphs of degree $\leq 6$ and their groups}, 
Amer. Jour. Math., {\bf 68} (1946), 505-520. 


\bibitem{god} C. D. Godsil, 
{\it $GRRs$ for non solvable groups. Algebraic methods in graph theory}
Colloquia Mathematica Societatis Janos Bolyai (1978)  221-239. 



\bibitem{gre2} M. Grech, 
{\it Direct product of automorphism groups of graphs}, 
Journal of Graph Theory {\bf 61} n.1 (2009), 26 - 36.


\bibitem{gre1} M. Grech, 
{\it The graphical complexity of direct products of permutation groups.} J. Graph Theory, online DOI 10.1002/jgt.20504. 




\bibitem{je} M. Grech, A. Je\.{z}, A. Kisielewicz, 
{\it Graphical complexity of products of permutation groups},
Discrete Mathematics {\bf 308} (2008), 1142--1152. 

 
\bibitem{grekis1} M. Grech, A. Kisielewicz, 
{\it  Direct product of automorphism groups of colored graphs},
Discrete Math. {\bf 283} (2004), 81-86.



\bibitem{grekis2} M. Grech, A. Kisielewicz, 
{\it Totally symmetric colored graphs},
Journal of Graph Theory {\bf 62} (2009), 329-345.


 



\bibitem{imr1} W. Imrich, 
{\it On Graphical Regular Representations of Groups},
Colloquia Mathematica Societatis Janos Bolyai (1973), 905-925.



\bibitem{imr2} W. Imrich, 
{\it On Graphical Regular Representations of Groups of Odd Order}
Colloquia Mathematica Societatis Janos Bolyai (1976), 611-621.











\bibitem{imrwa} W. Imrich, M. A. Watkins, 
{\it On graphical regular representations of cyclic extensions of groups} 
Pacific J. Math {\bf 55} (1974), 461-477.  






\bibitem{kis1} A. Kisielewicz, 
{\it Supergraphs and graphical complexity of permutation groups}, 
Ars Combinatora {\bf 100}, (2011), to appear. 


\bibitem{mss1} S. P. Mohanty, M. R. Sridharan, S. K. Shukla, 
{\emph On cyclic permutation groups and graphs}, 
J. Math. Phys. Sci. {\bf 12} (1978), no. 5, 409--416,



\bibitem{mss2} S. P. Mohanty, M. R. Sridharan, S. K. Shukla, 
{\emph Graphical cyclic permutation groups}, 
Combinatorics and graph theory (Calcutta, 1980), 339 -- 346, 
Lecture Notes in Math., 885, Springer, Berlin - New York, 1981, 



\bibitem{no} L. A. Nowitz, 
{\it On the Non-existence of Graphs with Transitive Generalized Dicyclic Groups} 
Journal of Combinatorial Theory {\bf 4} (1968), 49-51.












\bibitem{nowa1} L. A. Nowitz, M. E. Watkins, 
{\it Graphical Regular Representations of Non-Abelian Groups, I} 
Can. J. Math. {\bf v. XXIV, n. 6} (1972), 993-1008. 







\bibitem{nowa2} L. A. Nowitz M. E. Watkins, 
{\it Graphical Regular Representations of Non-Abelian Groups, II} 
Can. J. Math. {\bf v. XXIV, n. 6} (1972), 1009-1018. 





\bibitem{pei} W. Peisert 
{\it Direct product and uniqueness of automorphism groups of graphs} 
Discrete Math. {\bf 207} (1999), 189-197.








 





\bibitem{wa1} M. E. Watkins, 
{\it On the action of Non-Abelian Groups of Graphs} 
Journal of Combinatorial Theory {\bf 11} (1971), 95-104.





\bibitem{wa2} M. E. Watkins, 
{\it On graphical regular representations of $C_n \times Q$} 
in Graph theory and application, Lecture Notes in Mathematics n 303, 
Springer, Berlin 1972, 305-311.





\bibitem{wa3} M. E. Watkins, 
{\it Graphical Regular Representations of Alternating, Symmetric, and Miscellaneous Small Groups} 
Aequationes Math. {\bf 11} (1974), 40-50.





\bibitem{wie} H. Wielandt, 
{\it Permutation groups through invariant relation and invariant functions}, 
in ``Mathematische Werke/Mathematical works, vol. 1, Group theory'', 
Bertram Huppert and Hans Schneider (Eds.), Walter de Gruyter Co., 
Berlin, 1994, pp. 237-266.





\bibitem{yap} H. P. Yap, 
{\it Some topics in graph theory}, 
Cambridge Univ. Press 1986.


\end{thebibliography}
\end{document}